\definecolor{amethyst}{rgb}{0.6, 0.4, 0.8}
\definecolor{blue-violet}{rgb}{0.54, 0.17, 0.89}
\newcommand{\KK}{\mathbb{K}}
\newcommand{\QQ}{\mathbb{Q}}
\newcommand{\NN}{\mathbb{N}}
\newcommand{\ZZ}{\mathbb{Z}}
\newcommand{\LL}{\mathbb{L}}
\newcommand{\coker}{\mathrm{coker}\kern.5pt}
\theoremstyle{plain}
\newtheorem{theorem}{Theorem}[section]
\newtheorem{corollary}[theorem]{Corollary}
\newtheorem{proposition}[theorem]{Proposition}
\newtheorem{lemma}[theorem]{Lemma}
\theoremstyle{definition}
\newtheorem{definition}[theorem]{Definition}
\newtheorem{example}[theorem]{Example}
\newtheorem{remark}[theorem]{Remark}
\begin{document}

\title{On the Intermediate Value Theorem over a Valued Field}

\author{Carla Massaza  Lea Terracini  Paolo Valabrega  }
 
\date{}

\maketitle

\noindent {\bf Abstract: } The paper proves the intermediate value theorem for polynomials and power series over a valued field with divisible valuation group and infinite residue field. Some further results on the behaviour of the valuation are obtained using Hensel's Lemma.\\

\noindent {\bf Keywords: }valued fields, intermediate value theorem, Hensel's Lemma, polynomials, power series.\\

\noindent {\bf AMS Classification: } 12J10, 12J25.
\section{Introduction}

The intermediate value theorem (IVT for short) for a continuous function over the field of real numbers is a well-known, long ago established result of Mathematical Analysis. 

It is also well-known that it does not hold over a non-Archimedean ordered field $\KK$, even if it is maximal ordered and complete (see \cite{Corgnier}). However Bourbaki (see \cite{Bourbaki II}) shows that IVT holds true for any polynomial over a maximal ordered non-Archimedean field. As for power series, in \cite{Corgnier} and \cite{CorgnierII}  it is proved that IVT holds true over a complete maximal ordered non-Archimedean field.

When $\KK$ is a field equipped with a valuation $v: \KK \to G \cup \{\infty\}, G$ being an ordered group, IVT can be considered but it makes sense if we give it a different meaning.

On the field of real numbers, as well as on a non-Archimedean ordered field, IVT for a set $F$ of functions (polynomials, power series, $\ldots$) states that, given any $f \in F$ and any closed interval $[a,b] \subset \KK$,  if $f(a)f(b) < 0$, then there is $c \in ]a,b[$ such that $f(c) = 0$.

On a valued field $\KK$ condition $f(a)f(b) < 0$ makes no sense, so we replace it by condition $v(f(a)) > 0, v(f(b)) < 0$ (or conversely); as for condition $c \in ]a,b[ \subset \KK$, we will see that $ v(c) \in [v(a),v(b)] \subset G$ (closed interval) is the right replacement. We want to point out that there are cases that force $v(c)$ to coincide with one endpoint (and the endpoints might coincide).

The aim of the present paper is the investigation of an analogue of IVT for polynomials and power series  over a valued field (see \cite {EP}, \cite {Ribenboim1}), where analogue means that the above conditions are fulfilled. It is worth observing that, over a  non-Archimedean ordered field, the set of those $c$ that satisfy IVT for a given polynomial or power series  has finite cardinality (one in most cases), while in general on a valued field such set has infinite cardinality.

The main effort of this paper consists of a proof of IVT for polynomials. The result  holds true for polynomials under the following two assumptions:
\begin{equation}
\label{eq:Hypo}
\begin{array} {l}
$(i)$ \hbox{ the valuation group is divisible,} \\
$(ii)$ \hbox{ the residue field of the valuation is an infinite set.}
\end{array}
\end{equation}

 Actually, we prove more than IVT for polynomials over the valued field $\KK$; in fact we consider polynomials $f(X)$ defined over any overfield $\LL \supseteq \KK$ with a valuation extending the valuation of $\KK$ (with the same ordered group)  and investigate the function $x \to f(x), \ x \in \KK, \ f(x)$ being allowed to lie outside of $\KK$. 

It is easy to see that, from IVT, under our assumptions, the following extended statement can be obtained:
 
 if $v(f(b)) < \alpha < v(f(a))$ (or conversely), then there is $c \in \KK$ such that $v(f(c)) = \alpha$ and $v(c)$ lies between $v(a),v(b)$, not excluding the endpoints. 

We also show that our two assumptions (divisibility and infinite cardinality of the residue field) cannot be avoided.

The extension to power series (that requires the above extended statement) can be obtained (over a complete valued field) by using the property that IVT passes from a uniformly converging sequence of functions to its limit.

Alternatively the result can be attained by using Hensel's Lemma for the ring of restricted power series over the valuation ring of $v$,   with the same technique used in \cite{Valabrega}. 
Hensel's Lemma, by decomposing a series as a product of a polynomial and a series that reduces to $1$ in the residue field,  gives further side results on the valuation behaviour of  power series (Theorem \ref{teo:seriesbehaviour}) .

 We wish to point out that neither $\KK$ nor the overfield $\LL$ are assumed to be complete, or henselian or algebraically closed when IVT for polynomials is proved. When dealing with power series our techniques require an embedding of $\KK$ into a complete henselian extension (that is always existing under our hypotheses). 
 
 We also wish to notice that, in the case of $p$-adic valuations over a complete algebraically closed field, IVT, as stated in this paper for polynomials and power series, is also proved in \cite[section 2.5, p. 317]{Robert},  where well-established  techniques of $p$-adic analysis are used.

In the present  paper we use quite different techniques, both for polynomials and for power series and obtain results including fields that are not algebraically closed. Moreover, despite the countability assumption on the valuation topology, our results hold true for a wide class of valuations far away from the $p$-adic one, for instance in the case of a non-Archimedean ordered group of values whose topology is countable.

\section{The Intermediate Value Theorem for polynomials}\label{sect:due}

\subsection{General facts}
$\KK$ is a field equipped with a non-trivial valuation, i.e. an onto map $v: \KK \to G \cup \{\infty\}$, where $G$ is an ordered group, satisfying some additional properties (see \cite{Bourbaki}, \cite{Bourbaki II}, \cite{Ribenboim1}, \cite{Ribenboim2}). The valuation gives rise to the valuation ring $(A,M)$, where $a \in A$ if and only if $v(a) \ge 0$, while $m \in M$ if and only if $v(m) > 0$. It gives also rise to a topology $\tau$. Hence $\KK$ becomes a topological field and $A$, equipped with the restricted topology, is easily seen to be a Hausdorff topological ring (see \cite{EP}, p. 45). Such a topology is also easily seen to be linear.  The completion $\hat \KK$ of $\KK$ is a valued field and $v$ can be uniquely extended to $\hat \KK$ (\cite{Ribenboim1}).

If $\bar \KK$ is an algebraic closure of $\KK$, then $v$ can be extended, usually in many ways, to a valuation $\bar v$ of $\bar \KK$ whose group is the divisible group $G'$  generated by $G$ (\cite{Ribenboim1}). The extension is unique if and only if $\KK$ is henselian (\cite{Ribenboim1}). We observe that, if $G$ is divisible, then $G= G'$.

If $\LL \supset \KK$ is any overfield, then $v$ can be extended (not uniquely in general) to a valuation $v_L$ of the field $\LL$ with ordered group $G'$ containing $G$ as a subgroup (see \cite{Ribenboim1}, p.45, Theorem 5);  we will focus our attention on the case $G = G'$. It is known that, if $G$ is divisible, a valuation $v_L$ with $G' = G$ always exists.

In what follows we consider an extension $\LL \supset \KK$,  equipped with a fixed extension $v_L$ of $v$, and, for the sake of simplicity, since there is no ambiguity, we use the symbol $v$ also for $v_L$. In particular we use $v$ for a fixed extension of $v$ to the algebraic closure $\bar \KK$. Observe that, if $f(x)$ is any function defined on $\LL$ and taking its values in $\LL$, then it is well-defined $f_K: \KK \to \LL$, restriction of $f(x)$ to $\KK$. 
 \medskip

\noindent NOTATION. Once for all the symbol $]\{x,y\}[$  (respectively $\ [\{x,y\}], ]\{x,y\}],[\{x,y\}[)$   denotes the interval $]\inf\{x,y\}, \sup\{x,y\}[ \subset G$ (and analogously with the other intervals).

\begin{definition}\label{def:IVT} Let $\KK $ be a valued subfield of the valued field $\LL$  with the same valuation group $G$ as $\LL$ and let $f: \KK \to \LL$ be a function. Then we say that IVT holds for $f$ if, whenever $v(f(a)) > 0, v(f(b)) < 0$, with $ a,b \in \KK$, there is $c \in \KK$ such that $v(f(c)) = 0$ and $v(c) \in [\{v(a),v(b)\}]$.

We say that IVT holds true for polynomials (power series) if it holds true for every polynomial $f(X) \in \LL[X]$ (power series $S(X) \in \LL[[X]])$.
\end{definition}

Before any investigation on IVT, we want to show that it cannot hold true for every continuous function.

\begin{example}\label{ese:duedue} Let $\KK$ be any valued field  and choose a non-empty subset $U$ such that both $U$ and $\KK\setminus U$ are open (for instance $U$ can be chosen to be the maximal ideal $M$). Let us then choose $k \in \KK$ such that $v(k) = \alpha > 0$.

 Then define the following function $\KK \to \KK$:

$f(x) = k, \forall x \in U$

$f(x) = k^{-1}, \forall x \notin U$.

Obviously $f$ is continuous and IVT fails to hold true on every interval with non-empty intersection both with $U$ and with $\KK\setminus U$.
\end{example}

\subsection{The behaviour of $v(f(x))$}
The following propositions show that, when $f(X) \in \LL[X]$ is a  polynomial and $x$ varies in  $ \KK \subset \LL$, $v(f(x))$ is a function of $v(x)$ outside of a finite subset of $G$ and that such a function can be extended to a continuous function on the whole of $G$.

The piecewise linearity of the function $v(x) \mapsto v(f(x))$ is an elementary fact; nevertheless we include it below because it is a key result in order to establish our intermediate value theorem.

\begin{proposition}\label{prop:duetre} Let $f(X)  \in \LL[X]$ be a  polynomial, with (not necessarily distinct) roots $z_i, i = 1,\cdot \cdot \cdot, n$ in $\bar \LL$. Assume that $ v(z_1) = h_1 \le v(z_2) = h_2 \le \cdot \cdot \cdot \le  v(z_{n-m}) = h_{n-m} < \infty, v(z_{n-m+j}) = h_{n-m+j}=\infty, j=1= 1...m $. Then, for  $x \in \KK^{\times}$, $ \phi_f:  v(x) \to v(f(x))$ is a stepwise linear and non-decreasing function defined on the set $V = G\setminus \{h_1,\cdot \cdot \cdot,h_{n-m}\}$. 
\end{proposition}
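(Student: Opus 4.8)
The plan is to reduce everything to the behaviour of $v(x-z_i)$ for a single root, via the factorization of $f$ over the algebraic closure. First I would write $f(X) = c\prod_{i=1}^{n}(X - z_i)$, with $c \in \LL^{\times}$ the leading coefficient and $z_1,\dots,z_n \in \bar\LL$ the roots listed with multiplicity. Since $v$ is a valuation, for every $x \in \KK^{\times}$ one has
\[
v(f(x)) = v(c) + \sum_{i=1}^{n} v(x - z_i),
\]
so it suffices to analyse each summand $v(x-z_i)$ as a function of $v(x)$.

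The key elementary fact I would invoke is the sharpened ultrametric inequality: whenever $v(x) \neq v(z_i)$ one has the \emph{equality} $v(x - z_i) = \min\{v(x), v(z_i)\} = \min\{v(x), h_i\}$, with the convention $\min\{t,\infty\} = t$ to absorb the $m$ roots equal to $0$. This is precisely why the breakpoints $h_1,\dots,h_{n-m}$ must be removed from the domain: at $v(x) = h_i$ the quantity $v(x-z_i)$ is only bounded below by $h_i$ and genuinely depends on $x$, not merely on $v(x)$. Substituting, for every $t = v(x) \in V$ I obtain the closed formula
\[
\phi_f(t) = v(c) + \sum_{i=1}^{n} \min\{t, h_i\},
\]
which shows in particular that $v(f(x))$ depends only on $t = v(x)$, so that $\phi_f$ is a well-defined function on $V$.

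It then remains to read off the two asserted properties. Each summand $t \mapsto \min\{t, h_i\}$ is non-decreasing and piecewise affine, equal to $t$ for $t < h_i$ and constantly $h_i$ for $t > h_i$; a finite sum of such functions is again non-decreasing and piecewise linear, with integer slope on each of the finitely many intervals cut out by the $h_i$. Concretely, on the portion of $V$ determined by a value $t$ the slope equals $m + \#\{\,i \le n-m : h_i > t\,\}$, a non-negative integer that can only drop as $t$ crosses successive $h_i$; this simultaneously yields monotonicity and the stepwise (piecewise-linear) structure. I do not expect a genuine obstacle here: the entire content sits in the sharpened ultrametric equality and in isolating the finite exceptional set $\{h_1,\dots,h_{n-m}\}$ where it fails, the remaining verifications being formal. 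The only point deserving a word of care is that the breakpoints $h_i$ and the values of $\phi_f$ a priori lie in the divisible hull $G'$ of $G$ while $t$ ranges over $G$, a discrepancy which disappears under the standing hypothesis $G = G'$.
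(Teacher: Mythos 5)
Your proof is correct and takes essentially the same route as the paper: factor $f$ over $\bar\LL$ and exploit the ultrametric equality $v(x-z_i)=\min\{v(x),h_i\}$, valid precisely when $v(x)\neq h_i$, which is why the breakpoints are excised from the domain. The only difference is presentational: you compress the paper's case-by-case linear formulas into the single expression $\phi_f(t)=v(c)+\sum_{i}\min\{t,h_i\}$, which makes both well-definedness and monotonicity immediate (a finite sum of non-decreasing piecewise affine functions), whereas the paper verifies monotonicity by an explicit comparison of values across contiguous intervals.
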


 \begin{proof} As  $ z_{n-m+1} = ...= z_n =0$, we can write: 
 $$f(X)= 
q x^m (x-z_1)...(x-z_{n-m}) \hbox{ with }
q \in \LL.$$
 Let $\lambda= v (q)$. Then it holds:
\begin{enumerate}
\item  $ v(f(x)) =\lambda + nv(x)$ if $v(x) < h_1$,
\item $ v(f(x)) = \lambda + h_1+\ldots + h_i +(n-i) v(x) $ if $ h_i < v(x) < h_{i+1}, i\leq n-m-1$,
\item  $ v(f(x)) =\lambda+ h_1+\ldots +h_{n-m}+mv(x)$ if $ v(x) > h_{n-m}$.
\end{enumerate}
Therefore it is obvious that $\phi_f$ is linear and non-decreasing in each set $]-\infty,h_1[, ]h_i,h_{i+1}[, i\leq n-m-1, ]h_{n-m},+\infty[$. Let us now assume that $h_i < v(x) < h_{i+1}$ and  $h_{i+1} < v(y) < h_{i+2}, i+1 \leq n-m$. Then  $ v(f(x))$ satisfies 2, while  $ v(f(y)) = \lambda+h_1+\ldots + h_{i+1} +(n-i-1) v(y) $; it follows that $ v(f(x)) < \lambda+ h_1+\ldots + h_{i+1} +(n-i-1) v(x) < \lambda+h_1+\cdot \cdot \cdot h_{i+1} +(n-i-1) v(y)= v(f(y))$.
Monotonicity obviously holds when $v(x), v(y)$ belong to non-contiguous intervals.

If either $v(x) < h_1$ or $v(y) > h_{n-m}$, the above arguments work and give the proof.
\end{proof}

\begin{proposition}\label{prop:duequattro}  With the notation of Proposition \ref{prop:duetre}, let $z_i,\cdot \cdot \cdot, z_{i+r-1}$ be all the roots whose valuation is $h_i$  (for some $i \le n-m$). Let $x \in \KK$ be such that $ v(x) = h_i$. Then it holds: 
$$v(f(x))  \ge \lambda+h_1+\cdot \cdot \cdot +h_{i-1}+(n-i+1)h_i.$$
Moreover, if $\frac{A}{M}$ is an infinite field,  then there is $x \in \KK$ such that $v(f(x))$ reaches the minimum value $\lambda+h_1+\cdot \cdot \cdot +h_{i-1}+(n-i+1)h_i${\color{red}.}
\end{proposition}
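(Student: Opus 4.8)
The plan is to read off $v(f(x))$ directly from the factorization $f(X) = qX^m(X-z_1)\cdots(X-z_{n-m})$ of Proposition~\ref{prop:duetre}, using additivity of $v$ on products together with the ultrametric inequality. Writing $v(x)=h_i$, I would split the $n-m$ nonzero roots into three groups according to their valuation relative to $h_i$: the roots $z_1,\dots,z_{i-1}$ with $v(z_j)=h_j<h_i$; the $r$ roots $z_i,\dots,z_{i+r-1}$ with $v(z_j)=h_i$; and the roots $z_{i+r},\dots,z_{n-m}$ with $v(z_j)>h_i$. For $j$ in the first and third groups the valuations $v(x)$ and $v(z_j)$ differ, so $v(x-z_j)=\min\{v(x),v(z_j)\}$ holds with \emph{equality}, contributing $h_j$ and $h_i$ respectively; for $j$ in the middle group the ultrametric inequality gives only $v(x-z_j)\ge h_i$. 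Summing these, and adding the contribution $\lambda+m\,h_i$ coming from $q$ and $X^m$, the coefficient of $h_i$ totals $m+(n-m-i-r+1)+r=n-i+1$, which yields exactly the asserted lower bound $\lambda+h_1+\cdots+h_{i-1}+(n-i+1)h_i$.

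For the second assertion I would argue that, when $A/M$ is infinite, one can choose $x$ forcing equality $v(x-z_j)=h_i$ \emph{simultaneously} for all the roots $z_i,\dots,z_{i+r-1}$ of valuation $h_i$; by the computation above this drives $v(f(x))$ down to the claimed minimum, since the only place slack entered the estimate was the middle group. Fix $\pi\in\KK$ with $v(\pi)=h_i$ (such a $\pi$ exists, as the hypothesis provides at least one element of $\KK$ of valuation $h_i$). For $x\in\KK$ with $v(x)=h_i$, both $x/\pi$ and each $z_j/\pi$ are units of the valuation ring of $\bar\LL$, and $v(x-z_j)=h_i+v\bigl(x/\pi-z_j/\pi\bigr)$; hence $v(x-z_j)=h_i$ precisely when the residues $\overline{x/\pi}$ and $\overline{z_j/\pi}$ are distinct in the residue field of $\bar\LL$.

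It then suffices to dodge finitely many ``forbidden'' residues. As $x$ ranges over the elements of $\KK$ with $v(x)=h_i$, the residue $\overline{x/\pi}$ ranges over all nonzero elements of $A/M$, regarded inside the residue field of $\bar\LL$. The forbidden residues $\overline{z_j/\pi}$ ($j=i,\dots,i+r-1$) are all nonzero, and those among them that do not lie in $A/M$ are avoided automatically; the rest number at most $r$. Since $A/M$ is infinite, I can pick a nonzero $\bar u\in A/M$ different from each of these, lift it to a unit $u\in A$, and set $x=\pi u$. This $x$ satisfies $v(x-z_j)=h_i$ for every $j$, and so realizes the minimum.

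The step that needs the most care is this last one: one must work in the residue field of $\bar\LL$ rather than of $\KK$, observe that forbidden residues lying outside $A/M$ cost nothing, and invoke the infinitude of $A/M$ only to avoid the at most $r$ forbidden residues that actually lie in it. The remainder is a bookkeeping of valuations through the ultrametric inequality, with the whole subtlety concentrated in the roots of valuation exactly $h_i$, which are the only ones contributing a genuine inequality rather than an equality.
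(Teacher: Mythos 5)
Your proof is correct and takes essentially the same route as the paper's: the lower bound comes from the same three-way split of the roots and the ultrametric inequality, and the minimum is attained exactly as in the paper by writing $x=\pi u$ (the paper's $x=uz$) and choosing the residue of $u$ to differ from the finitely many residues $\overline{z_j/\pi}$, which the infinitude of $A/M$ permits. Your explicit care about working in the residue field of $\bar\LL$ and noting that forbidden residues outside $A/M$ are avoided for free is a point the paper glosses over, but it is the same argument.
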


 \begin{proof}
The first claim follows from the inequalities \quad  $ \forall j,  
v(x-z_j) \ge v(x) = h_i, i \le j \le i+r-1$; in fact
$v(f(x))  \ge \lambda+ h_1+\cdot \cdot \cdot+h_{i-1} + rv(x)+(n-r-i+1)h_i   = h_1+\cdot \cdot \cdot h_{i-1}+(n-i+1)h_i$. \\
To prove the second claim  choose $z \in \KK$ such that $v(z) = h_i$ and set: $x = uz, z_j = u_j z$ for $  j = i, \cdot \cdot \cdot, i+r-1$, where $u,u_j$ are all invertible elements in $\overline {\LL}$. Then 
$$v(x-z_j) = v(z)+v(u-u_j) = h_i + v(u-u_j) \geq h_i$$
and the equality is reached if we  
 choose  $u$ such that $\bar u \ne \bar u_j, j = i, \cdot \cdot \cdot,i+r-1$ in $\frac{\overline{A}}{\overline{M}}$ $(\frac{A}{M}$ is required to be an infinite set).

 \end{proof}

\begin{remark}\label{remramification}
$(i)$  As a consequence of the above proposition, $\phi_f$ can be extended to a continuous function on the whole of $G$ by choosing  the minimum value $v(f(x)) = \lambda + h_1+\cdot \cdot \cdot +h_{i-1}+(n-i+1)h_i$ whenever $v(x) = h_i$. The extended function will also be called $\phi_f$. 
Let us point out the obvious relation $\phi_f (x) \le v(f(x))$.\\
\noindent $(ii)$
 The points $v(x)$ where the relation $(v(x), v(f(x)))$ is multi-valued are exactly the values $v(z_i)$, where $z_i$ is a zero of $f(x)$ different from the zero element of $K$.   
\end{remark}

  \begin{lemma}\label{lem:duesei} The notation being as in Propositions \ref{prop:duetre} and \ref{prop:duequattro}, let $h$ be such that $ h_i =\cdot\cdot\cdot = h_{i+r-1} = h$.

Assume that there are a  $w_0 \in \KK$ such that $v(w_0)=h$ and an element $\delta > \phi_f(h), \delta \in G$ with the following property:  $v(f(w_0)) = \delta$. Then, if $\frac{A}{M}$ is infinite and $G$ is divisible, $\forall \gamma \in G, \phi_f(h) \le \gamma < \delta$, there is an  element  $w \in \KK$ such that $v(w)=h$ and $v(f(w)) = \gamma$. 

If moreover $\KK$ is algebraically closed, then, for every $\gamma \ge 0$,   there is  an  element  $w \in \KK$ such that $ v(w) = h ,\  v(f(w)) = \gamma$. 

\end{lemma}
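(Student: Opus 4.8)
The plan is to reduce everything to the valuation of the $r$ linear factors of $f$ whose roots have valuation $h$, and then to hit the prescribed value by a single translation of the witness $w_0$. First I would record the exact shape of $v(f(w))$ on the fibre $v(w)=h$. Writing $f(X)=qX^m\prod_{k=1}^{n-m}(X-z_k)$ as in Proposition~\ref{prop:duetre} and letting $z_i,\dots,z_{i+r-1}$ be the roots of valuation $h$, the computation of Proposition~\ref{prop:duequattro} shows that every factor other than the $r$ factors $X-z_j$ ($i\le j\le i+r-1$) contributes a fixed amount as soon as $v(w)=h$, so that
\[ v(f(w))=\phi_f(h)+\sum_{j=i}^{i+r-1}\bigl(v(w-z_j)-h\bigr),\qquad v(w)=h. \]
Each summand is $\ge 0$, the minimum $\phi_f(h)$ being the one of Proposition~\ref{prop:duequattro}. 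Setting $D=\delta-\phi_f(h)>0$ and $e_j=v(w_0-z_j)-h\ge 0$, the hypothesis reads $\sum_j e_j=D$, and for a target $\gamma$ with $\beta:=\gamma-\phi_f(h)\in[0,D)$ the goal becomes to produce $w\in\KK$ with $v(w)=h$ and $\sum_j\bigl(v(w-z_j)-h\bigr)=\beta$.

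The construction I would use is $w=w_0+t$ with $t\in\KK$ and $v(t)=\theta>h$, which automatically forces $v(w)=h$ and leaves all the other factors unchanged. For a root of valuation $h$ one has $w-z_j=(w_0-z_j)+t$, whence, away from the critical case $v(t)=v(w_0-z_j)$, the contribution of this factor is $\min(e_j,\theta-h)$. Thus I am led to solve $\sum_j\min(e_j,\theta-h)=\beta$ in the unknown $\theta-h$. The map $s\mapsto\sum_j\min(e_j,s)$ is continuous, piecewise linear and strictly increasing from $0$ to $D$ on $(0,\max_j e_j]$; on each linear piece it has positive integer slope, so divisibility of $G$ guarantees that the equation has a solution $\theta-h\in G$ for every $\beta\in(0,D)$. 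The value $\beta=0$ (i.e. $\gamma=\phi_f(h)$) is the minimum and is already furnished by Proposition~\ref{prop:duequattro}.

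The main obstacle is the critical case, when the solution $s=\theta-h$ coincides with one of the $e_j$: there $(w_0-z_j)+t$ may acquire valuation strictly larger than $\theta$ through a cancellation in the residue field, which would break the equality $v(w-z_j)=\min(h+e_j,\theta)$. I would remove it by prescribing the residue of $t$: fixing $\pi\in\KK$ with $v(\pi)=\theta$, I choose $t$ so that the class of $t/\pi$ avoids the finitely many classes $-(w_0-z_j)/\pi$ for those $j$ with $e_j=s$; this is possible exactly because the residue field $A/M$ is infinite, and it is here that this hypothesis is indispensable. With such a $t$ the equality $v(w-z_j)=\min(h+e_j,\theta)$ holds for every $j$, and therefore $v(f(w))=\phi_f(h)+\beta=\gamma$, as wanted. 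Thus divisibility is used to find $\theta$ and the infinite residue field to stay off the breakpoints; both hypotheses are genuinely needed.

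Finally, for the addendum I would argue that algebraic closedness lets one push the witness arbitrarily high. When $\KK$ is algebraically closed a root $z_{j_0}$ of valuation $h$ is available in $\KK$, so taking $w=z_{j_0}+t$ with $v(t)$ arbitrarily large makes $v(w-z_{j_0})$, hence $v(f(w))$, exceed any prescribed element of $G$. Given any $\gamma\ge\phi_f(h)$ one first produces in this way a $w_0$ with $v(f(w_0))=\delta>\gamma$ and then applies the first part; together with the minimum of Proposition~\ref{prop:duequattro} this realizes every value of $[\phi_f(h),+\infty)$ on the fibre $v(w)=h$. The delicate point to watch in this last step is precisely that algebraic closedness is what places a root of valuation $h$ inside $\KK$ and allows it to be approximated arbitrarily well by elements of $\KK$, something a root living only in $\bar\LL$ need not permit.
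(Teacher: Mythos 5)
Your argument for the main statement is, in substance, the paper's own proof. The paper normalizes ($f$ monic, $h=0$, only the roots of valuation $h$ kept, the rest absorbed into a constant $\eta$) and then distinguishes Case 1, where $\gamma$ lies strictly between two consecutive breakpoint values $m_1+\cdots+m_s+(n-s)m_s$ and one solves $(n-s)v(x)=\gamma-m_1-\cdots-m_s$ by divisibility, and Case 2, where $\gamma$ equals such a breakpoint and one takes $x$ of valuation $m_s$ whose residue avoids finitely many classes, using that $\frac{A}{M}$ is infinite. Your single equation $\sum_j\min(e_j,s)=\beta$, solved by divisibility on the linear pieces, with residue avoidance imposed exactly when $s$ hits some $e_j$, is the same translation construction $w=w_0+t$ in unified form; so this part is a correct repackaging of the paper's argument rather than a new one. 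The genuine divergence is in the algebraically closed addendum: the paper argues directly, choosing $y$ with $v(y)=\gamma$ and solving an equation of the form $f(w)=uy$ in the algebraically closed field, then reading off $v(w)=0$ from the shape of that equation; you instead bootstrap from the first part by manufacturing witnesses $w_0=z_{j_0}+t$ with $v(f(w_0))$ arbitrarily large near a root $z_{j_0}$ of valuation $h$. Your version is arguably cleaner, since it reuses part one instead of a separate ad hoc argument; just note that you should take $v(t)$ not only large but exceeding all finite values $v(z_{j_0}-z_j)$, so that $v(f(w_0))$ is finite (this is automatic for $v(t)$ large, hence not a gap). Finally, both your addendum and the paper's tacitly require the roots of $f$ to lie in $\KK$: for $f\in\LL[X]$ with $\LL$ a proper extension of $\KK$, algebraic closedness of $\KK$ does not place the $z_j$ inside $\KK$ --- a caveat you correctly flag, but which affects the paper's own proof in exactly the same way.
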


\begin{proof} It is enough to prove  the claim under the hypothesis   
$$v(z_1) = \cdot\cdot\cdot = v(z_n) = h.\quad\quad (*)$$
Indeed let us consider the polynomial 
$$g(X) =q (X-z_i) ( X-z_{i+1}) \cdot\cdot\cdot (X-z_{i+r-1}),$$
satisfying  condition $(*)$. For every $x$ with $v(x) = h $, we have:
$$v(f(x)) = mh+ v(q) + v(g(x))+ \sum _{j<i} h_j + (n-i+r-1) h = v(g(x)) + \eta,$$
where $\eta$ does not depend on $x \in  \{ t:  v(t) = h\}$.
Analogously, 
$$\phi_f(x) = \phi_g(x) +\eta,$$ whenever $v(x) = h$ .    

Hence $v(f(w_0)) =\delta > \phi_f (h)$ is equivalent to  $v(g(w_0)) = \delta -\eta >\phi_g(h)$  and  $\phi_f (w) = \alpha $ is equivalent to $\phi_g(w) = \alpha -\eta$. As a consequence, it is enough to prove the statement for $g(X)$, with $\delta$ and $\alpha$ replaced respectively by $\delta - \eta, \alpha-\eta.$

So, we add condition $(*)$ to the hypothesis and set

$ \lambda = v(q), \  \mu = \phi_f(h) = \lambda + nh$.

Now we notice that it suffices to prove the theorem when $f(X)$ is monic and $h=0$. In fact, suppose the claim true in this case. Given $f(X), \delta,\gamma$ as above, define $e(X) = 
(q w_0^n)^{-1}f(w_0X), \delta'=\delta-\mu, \gamma'=\gamma-\mu$. Then $e(X)$ is monic with unitary roots $u_i = w_0^{-1} z_i$
, $ v(g(1))=\delta '$ and $\gamma'>0$, so that by the theorem there exists $z\in\KK$ such that $v(z)=0$ and $ v(e(z))=\gamma'$. Then put $w=w_0z$.\\
Therefore we assume $
q=1, h=0$  and, as a consequence, $\mu =0.$ 

{\it{Step}} 1. If $\gamma = 0$, thanks to Proposition 2.4 it is enough to select any $w \in \KK$ such that $\bar w \ne \bar u_i \in \frac A M, \forall i$ (the residue field has infinitely many elements). In this case we do not need the hypothesis on the existence of $\delta$ and $w_0$.

{\it{Step 2}}. If $\gamma > 0$,  set: $m_0 = 0, m_i = v(w_0-u_i), \forall i$, where $0 = m_0 \le m_1 \le m_2 \le \cdot \cdot \cdot \le m_n$.

Since $\gamma < \delta = m_1+\cdot \cdot \cdot +m_n$ the following cases can occur.

Case 1. There is an integer $s, 0 \le s \le n-1$ such that 

$m_0+\cdot \cdot \cdot +m_s +(n-s)m_s< \gamma < m_0+\cdot \cdot \cdot +
 m_s+ (n-s)m_{s+1}$, \  $ m_s \not = m_{s+1}$.

(Observe that  $m_s = m_{s+1}$ implies that the two sums coincide and the condition cannot be satisfied).

In this event choose any $x \in \KK$ such that $(n-s)v(x) = \gamma - m_1 - \cdot \cdot \cdot - m_s$; since it holds $m_s < v(x) < m_{s+1}$, such an element $x$ gives rise to $w = w_0+x$ with the required property.

We observe that the open intervals $]m_0+\cdot \cdot \cdot +m_s +(n-s)m_s, m_0+\cdot \cdot \cdot +m_s+ (n-s)m_{s+1}[$ cover the whole $ ]m_0,m_0+\cdot \cdot \cdot+ m_n[$ with the exception of the intermediate endpoints, as a consequence of  the following equalities: 

$m_0+\cdot \cdot \cdot +m_{s+1} +(n-s-1)m_{s+1} =  m_0+\cdot \cdot \cdot +m_s+ (n-s)m_{s+1}$.

Case 2.   
 $(\exists s , s<n , m_s \not= m_{s+1})  \  m_0+\cdot \cdot \cdot +m_s +(n-s)m_s = \gamma$.

   Let us assume that  $m_s =m _{s-1 }= \cdot \cdot \cdot = m_{s-r} > m_{s-r-1}$,

  In this event it is enough to produce an element $x \in K$ with the properties

$$v(x) = m_s, \quad v(x+w_0) = 0,   v (x+(w_0-u_i)) = m_s, i = s-r, \cdot \cdot \cdot, s$$
and choose $w =x+w_0$.

 In fact  we have:
$f(w) = \prod ^n_{j=1} (x+ w_0 -u_j)$  and, as a consequence 

$ v(f(w)) = \sum ^n_{j=1} v(x+(w_0-u_j)) =  m_1+...+m_{s-r-1} +(n-s-r-1)m_s = \gamma$

The element $x\in K$ can be obtained as follows.

 Choose any $y\in K$, such that $v(y) = m_s$ and  set  $(w_0-u_j)= -y t_j, j= s-r,...,s,\   t_j \in \bar K, v(t_j) = 0$.
Then take  $t\in K, v(t) = 0$, such that  $\bar t \not= \bar t_j, j= s-r,...,s$, as elements of $\frac A M$  and set $x=ty$. Then: $v(x) = v(y) = m_s, \  v(x+w_0-u_j) = v(y) +v(t-t_j) =v(y) = m_s $ and $v(x+w_0) = v(w_0) =0$.

 Finally, let us suppose that $\KK$ is  algebraically closed. The proof when $\gamma = 0$ is the same as in Step 1, so we assume $\gamma > 0$.   We want to find an element $w$ such that $v(f(w)) = \gamma \ge 0$, i.e. $v((w-u_1)\cdot \cdot \cdot (w-u_n)) = \gamma > 0$. Select $y \in \KK$ such that $\gamma = v(y)$, so that $v((w-u_1)\cdot \cdot \cdot (w-u_n)) = v(y)$. So, it is enough to choose an element  $w$ satisfying a relation of the form   $w(w^{n-1}+b_{n-2}w^{n-2}+\cdot \cdot \cdot +b_1)= uy-(-1)^nu_1 \cdot \cdot \cdot u_n$, where, $b_1, \cdot \cdot \cdot  b_{n-2}$ are elements with non-negative valuations and $u$ is any invertible. Since $v(uy-(-1)^nu_1 \cdot \cdot \cdot u_n) = 0$, also $v(w) = 0$, for every $w$ solution of the equation in the algebraically closed field $\KK$, which proofs our claim. 

\end{proof}

\begin{remark} 
Observe that the case $\gamma = 0$ requires the only hypothesis that the residue field be an infinite set.

 Let us point out that  this condition is obviously fufilled if $\frac{A}{M}$ has characteristic $0$. It is also fulfilled when $\KK$ is any algebraically closed field,  because in this event  the residue field is still algebraically closed.  
 
  In fact, if $\KK$ is algebraically closed, let $\bar P(X)$ be any monic  polynomial over $\frac{A}{M}$ with degree $ m \ge 2$. Then there is a monic polynomial $P(X) \in A[X]$ such that $\bar P(X)$ is its image mod $M$. Since $\KK$ is algebraically closed, $P(X) = (X-x)Q(X)$, where $x \in A$, since $x$ is integral over the integrally closed ring $A$. Therefore $\bar P(\bar x) = 0$, so that $\bar P(X)$ has a root in $\frac A M$, i.e. the residue field is algebraically closed.

\end{remark}
We can summarize the above results concerning  the behaviour of $v(x)\mapsto v(f(x))$ in the following: 
\begin{theorem}\label{teo:polynomialbehaviour} Let $\KK$ be a valued field satisfying conditions (\ref{eq:Hypo}) and extend the valuation $v$ to any valuation of  $\overline{\KK}$.
Let $f(X)\in \KK[X]$ be a polynomial. Set   $T=\{x\in \overline{\KK}\ |\ f(x)=0\}$. Then 
\begin{itemize}
\item[a)] $\phi_f:v(x)\mapsto v(f(x))$ is a stepwise linear and non-decreasing function defined on the set $V=G\setminus v(T)$.
\item[b)] Let $\alpha\in G$. Then the set $\{v(f(x))\ |\ x\in\KK \hbox{ and } v(x)=\alpha\}$ is a left-bounded and left-closed interval in $G$.
\item[c)] The map $\phi_f$ can be extended to a continuous function on the whole of $G$ by choosing for $\alpha\in G$
$$\phi_f(\alpha)=\min\{v(f(x))\ |\ x\in\KK\hbox{ and } v(x)=\alpha \}.$$ 
\end{itemize}
\end{theorem}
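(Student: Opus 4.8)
The plan is to obtain the three assertions by assembling the results already in hand: part a) is essentially Proposition \ref{prop:duetre}, while parts b) and c) combine Proposition \ref{prop:duequattro} with Lemma \ref{lem:duesei}. Throughout I write $h_1 \le \cdots \le h_{n-m}$ for the valuations of the nonzero roots of $f$, so that the only element of $v(T)$ outside $G$ is $\infty$ and hence $V = G \setminus v(T) = G \setminus \{h_1,\ldots,h_{n-m}\}$. For part a) I would simply invoke Proposition \ref{prop:duetre}: whenever $v(x) \notin \{h_1,\ldots,h_{n-m}\}$ each factor satisfies $v(x-z_j) = \min\{v(x),h_j\}$ because $v(x) \ne h_j$, so $v(f(x))$ depends only on $v(x)$; the explicit formulas (1)--(3) there display $\phi_f$ as linear with non-negative integer slope on each open interval $]{-\infty},h_1[$, $]h_i,h_{i+1}[$, $]h_{n-m},{+\infty}[$, the slopes decreasing with $v(x)$, which gives monotonicity.

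For part b) I would fix $\alpha \in G$ and set $S_\alpha = \{v(f(x)) \mid x \in \KK,\ v(x)=\alpha\}$. If $\alpha \notin \{h_1,\ldots,h_{n-m}\}$, the computation of part a) shows $v(f(x))$ is the same for every such $x$, so $S_\alpha$ is a singleton and the claim is trivial. If $\alpha = h$ is one of the $h_i$, I collect all roots $h = h_i = \cdots = h_{i+r-1}$ of that valuation; Proposition \ref{prop:duequattro} then provides the lower bound $\phi_f(h) = \lambda + h_1 + \cdots + h_{i-1} + (n-i+1)h$ for every element of $S_\alpha$ and, using that $A/M$ is infinite, produces an $x$ attaining it, so $S_\alpha$ is left-bounded and left-closed with minimum $\phi_f(h)$. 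The one remaining point is convexity: given any $\delta \in S_\alpha$ with $\delta > \phi_f(h)$, Lemma \ref{lem:duesei} supplies, for every $\gamma$ with $\phi_f(h) \le \gamma < \delta$, an element $w$ with $v(w)=h$ and $v(f(w))=\gamma$, so $[\phi_f(h),\delta[\,\subseteq S_\alpha$. As this holds for each $\delta \in S_\alpha$, the set $S_\alpha$ is an interval with left endpoint $\phi_f(h)$ belonging to it.

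For part c) part b) guarantees that $\phi_f(\alpha) := \min S_\alpha$ is well-defined on all of $G$, and on $V$ it agrees with the $\phi_f$ of part a) since there $S_\alpha$ is a singleton. To check continuity I would verify that at each break point $h_i$ the one-sided limits of the neighbouring linear pieces coincide with the assigned value. Using formulas (1)--(3) of Proposition \ref{prop:duetre}, the limit along $]h_{i-1},h_i[$ as $v(x)\to h_i^-$ and the limit along the next interval as $v(x)\to h_i^+$ both equal $\lambda + h_1 + \cdots + h_{i-1} + (n-i+1)h_i$ (the same computation works, with the obvious indexing, when several roots share the valuation $h_i$), and this is exactly the minimum value furnished by Proposition \ref{prop:duequattro}. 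Hence the chosen minima glue the linear pieces continuously, which is the content of Remark \ref{remramification}(i).

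I expect the only genuinely nontrivial ingredient to be the convexity in part b): it is not a formal consequence of piecewise linearity but precisely the place where both standing hypotheses enter, divisibility of $G$ and infiniteness of $A/M$, through Lemma \ref{lem:duesei}. Parts a) and c) are then essentially bookkeeping: a) is a direct reading of the explicit formulas, and c) is a routine one-sided-limit computation confirming that the minimum values interpolate the linear segments continuously.
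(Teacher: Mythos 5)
Your proposal is correct and follows exactly the route the paper intends: the theorem is stated there as a summary of Proposition \ref{prop:duetre} (part a), Proposition \ref{prop:duequattro} together with Lemma \ref{lem:duesei} (part b, including the convexity argument via $[\phi_f(h),\delta[\,\subseteq S_\alpha$), and Remark \ref{remramification}(i) (part c), which is precisely how you assemble it. Your write-up in fact supplies the gluing details (matching one-sided limits at the break points) that the paper leaves implicit, so there is nothing to correct.
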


We are now ready to state and prove the intermediate value theorem for polynomials.

\begin{theorem}[Polynomial intermediate value theorem] \label{prop:dueotto} Let $f(X) \in \LL[X]$ be any  polynomial such that $ v(f(a)) > v(f(b))$ for some $a,b \in \KK$, where $v(a),v(b)$ are not necessarily distinct. Then for every $\alpha \in G$ such that $ v(f(a)) > \alpha >  v(f(b))$ there is $c \in \KK$ such that:
\begin{enumerate}
\item $ v(f(c)) = \alpha$,
\item  $v(c) \in [\{v(a),v(b)\}]$.
\end{enumerate}
\end{theorem}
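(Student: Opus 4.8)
The plan is to realize the target value $\alpha$ by combining the two elementary movements already prepared in the previous results: a \emph{horizontal} movement along the graph of $\phi_f$, in which one varies $v(x)$ and reads off the minimal value $v(f(x))=\phi_f(v(x))$, and a \emph{vertical} movement at a fixed root level, supplied by Lemma~\ref{lem:duesei}, in which $v(x)$ is held equal to some $h_i$ while $v(f(x))$ is made to sweep an interval of values. First I would record the two facts I rely on. By Proposition~\ref{prop:duetre} and Remark~\ref{remramification}, the extended $\phi_f$ is continuous and non-decreasing on all of $G$ and piecewise linear with nonnegative integer slopes; hence on each increasing piece the slope is a positive integer $k$, and divisibility of $G$ lets $ku=\beta$ be solved in $G$, so that $\phi_f$ maps any interval $[s,t]$ \emph{onto} $[\phi_f(s),\phi_f(t)]$. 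By Proposition~\ref{prop:duequattro} (using that $A/M$ is infinite) and the surjectivity of $v$, the minimum $\phi_f(t)$ is actually attained: there is $x\in\KK$ with $v(x)=t$ and $v(f(x))=\phi_f(t)$.

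Starting from $\phi_f(v(b))\le v(f(b))<\alpha$ (the inequality $\phi_f(v(x))\le v(f(x))$ being noted in Remark~\ref{remramification}), I would split according to the position of $\alpha$ relative to $\phi_f(v(a))$. \emph{Case one:} $\phi_f(v(a))\ge\alpha$. Since $\phi_f$ is non-decreasing and $\phi_f(v(b))<\alpha\le\phi_f(v(a))$, necessarily $v(b)<v(a)$, so $[\{v(a),v(b)\}]=[v(b),v(a)]$; the surjectivity above yields $t\in[v(b),v(a)]$ with $\phi_f(t)=\alpha$, and attainment of the minimum gives $c\in\KK$ with $v(c)=t$ and $v(f(c))=\alpha$, settling both conclusions. \emph{Case two:} $\phi_f(v(a))<\alpha$. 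Then $\phi_f(v(a))<\alpha<v(f(a))$, so $v(f(a))$ exceeds the minimum $\phi_f(v(a))$; by Remark~\ref{remramification}(ii) this forces $v(a)$ to be one of the root valuations $h_i$. This is exactly the hypothesis of Lemma~\ref{lem:duesei} with $h=v(a)$, $w_0=a$, $\delta=v(f(a))$, and since $\phi_f(v(a))\le\alpha<\delta$ the lemma produces $c\in\KK$ with $v(c)=v(a)$ and $v(f(c))=\alpha$; here $v(c)=v(a)\in[\{v(a),v(b)\}]$ trivially.

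The step I expect to be the crux — and the one that makes both hypotheses in (\ref{eq:Hypo}) indispensable — is case two: when $a$ sits over a cluster of roots, the horizontal function $\phi_f$ alone undershoots $\alpha$, and the only way to climb to $\alpha$ while keeping $v(c)=v(a)$ is the vertical filling of Lemma~\ref{lem:duesei}, which already consumes divisibility of $G$ and infiniteness of $A/M$. The remaining care is purely organisational: checking that the dichotomy is exhaustive (the degenerate possibility $v(a)=v(b)$ cannot occur in case one, since it would force $\phi_f(v(a))=\phi_f(v(b))$, and therefore always falls in case two) and that the surjectivity of $\phi_f$ onto value-intervals is not broken by a possible terminal flat piece of slope $m=0$, which contributes only a single already-covered value.
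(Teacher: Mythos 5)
Your proof is correct and follows essentially the same route as the paper's: the same dichotomy on whether $\alpha \le \phi_f(v(a))$ or $\phi_f(v(a)) < \alpha$, with the first case handled by piecewise linearity of $\phi_f$ plus divisibility of $G$ (Proposition \ref{prop:duetre}) and attainment of the minimum (Proposition \ref{prop:duequattro}), and the second case by Lemma \ref{lem:duesei} applied with $h = v(a)$, $w_0 = a$, $\delta = v(f(a))$. If anything, you are slightly more careful than the paper in the first case, where the paper asserts that \emph{every} $c$ with $v(c)=\eta$ works (true only when $\eta$ is not a root valuation), whereas you explicitly invoke attainment of the minimum via Proposition \ref{prop:duequattro}.
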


\begin{proof}

 We can suppose that $f(X)$ is monic.\\
  
  Since  $\phi_f(v(b))\leq  v(f(b))$, we have $\phi_f(v(b))<\alpha$. We consider the following cases.

Case 1.  $\alpha \leq \phi_f(v(a))$.  Since $\phi_f$ is non-decreasing  and    $\phi_f(v(b))<\alpha \leq \phi_f(v(a))$, we have necessarily $v(b)\leq v(a)$.  As $\phi_f$ is stepwise linear and  continous on $[\{v(a),v(b)\}] = [v(b),v(a)]$, there is an element $\eta \in [v(b),v(a)]$ such that $\phi _f(\eta) = \alpha$.
Indeed, choose a sub-interval  $[v(b'),v(a')] \subset [v(b),v(a)]$ such that $\phi_f$ is linear on  it,  say $\phi_f(v(x)) = A+B v(x), B \ne 0$,  and $\phi_f(v(b')) \leq \alpha \leq \phi_f(v(a'))$. Then $\eta = \frac{\alpha-A}{B}$ works,
So, every $c$ such that $v(c) = \eta$ is a solution.

Case 2. $\phi_f(v(a)) < \alpha $.   This implies  $\phi_f(v(a)) < v(f(a))$; hence $v(a)$ coincides with the valuation of some roots of $f(X)$, so that we can conclude by using Lemma \ref{lem:duesei}.

Observe that in this case $v(c) = v(a)$.

\end{proof}

 \begin{remark} \label{infinity} Notice that the proof of Theorem \ref{prop:dueotto} (together with the preliminary results) actually shows that  the elements $c$ satisfying conditions $1$ and $2$  are infinitely many.
 \end{remark}

The following proposition shows that our two conditions on $\KK$ are necessary.

\begin{proposition}\label{prop:duenove} Let $\KK$ be a valued field such that IVT holds true for every polynomial $P(X) \in \LL[X]$. Then conditions (\ref {eq:Hypo}) are fulfilled.

\end{proposition}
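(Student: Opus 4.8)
The plan is to argue by contraposition, exhibiting for each failing hypothesis an explicit polynomial in $\KK[X]\subseteq\LL[X]$ for which IVT breaks down. Thus I would assume that IVT holds for every polynomial in $\LL[X]$ and deduce the two conditions separately.

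To see that $G$ must be divisible, suppose it is not: there is an integer $n\ge 2$ and an element $\alpha\in G$ with $\alpha\notin nG$, where $nG=\{ng\mid g\in G\}$. I would take $f(X)=X^n$, so that $v(f(x))=n\,v(x)\in nG$ for every $x\in\KK^\times$; hence $\alpha$ is never a value of $f$. It then remains only to bracket $\alpha$ by two genuine values of $f$. For this I would first record that the subgroup $nG$ is both cofinal and coinitial in $G$: for $g\ge 0$ one has $ng\ge g$ and for $g\le 0$ one has $ng\le g$, so $nG$ meets every upper and every lower half-line. Consequently there are $\beta,\gamma\in G$ with $n\gamma<\alpha<n\beta$, and using the surjectivity of $v$ I would pick $a,b\in\KK$ with $v(a)=\beta$, $v(b)=\gamma$. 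Then $v(f(a))=n\beta>\alpha>n\gamma=v(f(b))$, yet no $c\in\KK$ satisfies $v(f(c))=\alpha$, contradicting IVT.

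To see that $A/M$ must be infinite, suppose instead that it is finite with $q$ elements, and let $R\subseteq A$ be a set of lifts of all $q$ residue classes, chosen so that $0\in R$ represents the zero class. I would take $f(X)=\prod_{c\in R}(X-c)$ and show that $0$ is never a value of $f$ on $\KK$, by splitting on the sign of $v(x)$: if $v(x)<0$ then $v(x-c)=v(x)$ for every $c\in R$, so $v(f(x))=q\,v(x)<0$; if $v(x)>0$ then the factor $X-0$ contributes $v(x)$ while the unit factors contribute $0$, giving $v(f(x))=v(x)>0$; and if $v(x)=0$ then $\bar x=\bar c$ for some $c\in R$, the residues being exhausted, which forces $v(x-c)>0$ and hence $v(f(x))>0$. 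Thus $v(f(x))\ne 0$ for all $x\in\KK$. Taking any $a$ with $v(a)>0$ (so $v(f(a))=v(a)>0$) and any $b$ with $v(b)<0$ (so $v(f(b))=q\,v(b)<0$) yields $v(f(a))>0>v(f(b))$, whence IVT would produce $c\in\KK$ with $v(f(c))=0$, which is impossible.

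The delicate point is the second construction: one must guarantee that the forbidden value $0$ is skipped uniformly across all valuation levels at once, and in particular that it is recovered neither on the region $v(x)>0$ (which is precisely why the root at $0$ is included) nor on the level $v(x)=0$ (which is precisely why all residue classes must occur among the roots). This robustness is what makes the example succeed even when $G$ is divisible, so that the two hypotheses are exhibited as genuinely independent. By contrast, the divisibility argument is essentially a bookkeeping step once the cofinality and coinitiality of $nG$ have been verified in a possibly non-Archimedean ordered group.
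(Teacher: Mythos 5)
Your proposal is correct and takes essentially the same route as the paper: the identical two counterexamples, namely $X^n$ for non-divisibility of $G$ and $\prod_{i}(X-z_i)$ over a complete set of residue-class representatives for finiteness of $A/M$ (your normalization $0\in R$ is a harmless cosmetic improvement that makes the case $v(x)>0$ cleaner than in the paper). The only caveat --- shared with the paper's own proof --- is that in the divisibility part, contradicting IVT at the non-attained value $\alpha$ strictly requires applying the hypothesis to a rescaled polynomial $kX^n$ with $v(k)=-\alpha$, since the IVT of Definition \ref{def:IVT} only produces elements where the valuation equals $0$; this reduction is immediate because IVT is assumed for \emph{every} polynomial in $\LL[X]$.
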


\begin{proof}
(i) Assume that $\frac{A}{M}$ is a finite field of order $q$. Let $\{z_1, \cdot \cdot \cdot, z_q\}$ be a set of representatives of the elements in $\frac{A}{M}$, and set:

$$P(X)= (X-z_1)(X-z_2)...(X-z_q).$$ 

Let $x$ be in $\KK$; then 

a) if $v(x) < 0$, then $v(P(x)) = qv(x) < 0$,

b) if $v(x) > 0$, then $v(P(x) = 0$,

c) if $v(x) = 0$, then $v(P(x)) > 0$. 

Properties a) and b) being obvious, let us consider an element $x$ such that $v(x) = 0$; since $x \in A$ we have: $\bar x = \bar z_i$ for exactly one element $z_i$.  Therefore $v(x-z_i) > 0 $ and $v(x-z_j) = 0$ whenever $j \ne i$, so that $v(P(x)) = \sum_jv(x-z_j) > 0$. 

 Let us now choose any interval $[a,x]$ such that $ v(a) < 0, v(x) = 0$. Then $v(P(a)) < 0, v(P(x)) > 0$ and $v(a) \le v(c) \le v(x)$ implies $v(P(c)) \ne 0$, so that IVT does not hold true.
  
  (ii) Assume that there are $a \in \KK, h \in G, n \in \NN, n \ge 1$ with the following property: $v(a) = h$ and there is no $x \in G$ such that $nx = h$. By replacing, if necessary,  $a$ with $a^{-1}$, we can assume that $h > 0$.

  Set: $P(X) = X^n$.  Let us choose $a \in \KK$ such that $nv(a) > h$ (this is obviously possible in every ordered group). Notice that $v(P(a^{-1})) = -nv(a) < 0 < h < v(P(a)) = nv(a)$. Assume now that IVT holds true  for $P(X)$; this imples that there is $c \in \KK$ such that $nv(c) = h$.  This is a contradiction.

\end{proof}

  \bigskip

\section{Power series} 

We fix once for all a field $\KK$ equipped with a valuation $v$ with divisible group $G$, having $(A,M)$ as its valuation ring with infinite residue field. We also fix an overfield $\LL \supset \KK$, equipped with a valuation extending $v$ and whose group is $G$; we will call $v$ also this valuation, since there is no ambiguity.  Let $\tau$ be the  topology induced by $v$ on the field $\LL$. 

Since we study countable converging sequences, we are forced to  assume that $\tau$ has a countable basis for the set of neighbourhoods of $0$, because otherwise the only converging series are the polynomials.

We also assume from now on that  $\LL$  is complete. 

The valuation ring $(A_L,M_L)$ is separated  and complete with respect to  $\tau$. It is worth observing that the sets $\{x \in A_L, v(x) > \alpha\}$ are, for every $\alpha \in G$, ideals of $A_L$, so that $\tau$ is a linear topology on $A_L$; moreover $M_L$ is easily seen to be a closed ideal. We will call $\tau$ also the restriction of $\tau$ to $\KK$. 

Such a restriction needs not be complete, but is still a separated linear topology.

Despite the countability restriction, our results hold true for a wide class of valued fields, far away from every $p$-adic field, as it is shown in the following remark and example.

We recall that the rank of a valuation is the length of a maximal chain of non-trivial convex subgroups of the valuation group $G$ (see \cite{EP}, section 2.1, p. 26); it coincides with the Krull dimension of the valuation ring (see \cite{EP}, Lemma 2.3.1). It is well-known that  rank $1$ and  real are equivalent (see \cite{EP}, Proposition 2.1.1)

We also recall (see \cite{Ribenboim1}, p. 66) that $\tau$ has a countable basis if and only if either the rank of $G$ is finite or the zero ideal is a countable intersection of prime ideals.

\begin{remark} Let $G$ be a non-trivial abelian ordered group containing a proper maximal convex subgroup (in particular this holds true when $G$ has finite rank). Then there is  $x \in G$ such that $\lim_{n \to \infty} nx = \infty$, i.e. $\forall g \in G, \exists n \in \NN$ such that $nx > g$.

Indeed, let $H$ be a proper convex maximal subgroup and let $x$ be in $G\setminus H$. and $y$ in  $G$. Since the convex subgroup generated by $H$ and $x$ is $G$ itself, it holds: $\exists h \in H, \exists n \in \NN$ such that $ y < h+nx < x + nx = (n+1)x$.

We observe that, if $\KK$ is a valued field whose group satisfies the above conditions, then $\KK$ is forced to contain at least one topologically nilpotent element.
\end{remark}

The following example shows that a countable basis can exist even when $\KK$ contains no topologically nilpotent element.

\begin{example} 

Step 1. Let $A_1 = \QQ[\epsilon_1]$, where $\epsilon_1$ is transcendental over $\QQ$. Then $\KK_1 = \QQ(\epsilon_1)$ is the quotient field of $A$.  
 
 We set: $\bar v(\sum_{i = 0}^n a_i \epsilon_1^i) = $ min $\{i | a_i \ne 0\}$.
 
 Such a function  $\bar v$ can be obviously  extended to a valuation $v: \KK_1 \to \ZZ$.
 
Step 2. Let $A_n = \QQ[\epsilon_1, \cdot \cdot \cdot, \epsilon_n]$, where $\epsilon_1, \cdot \cdot \cdot ,\epsilon_n$ are algebraically independent over  $\QQ$. Let $\KK = \QQ(\epsilon_1, \cdot \cdot \cdot, \epsilon_n)$ be the quotient field of $A$ and let us order lexicographically the group $\ZZ^n$ and put: $\bar v(\epsilon_i) = (0,\cdot \cdot \cdot,1_i,\cdot \cdot \cdot, 0), i = 1, \cdot \cdot \cdot ,n, \ \bar v(a) = 0, a \in \QQ$.
 
 We set: 
 
 $\bar v_n(a_{i_1 \cdot \cdot \cdot i_n} \epsilon_1^{i_1} \cdot \cdot \cdot \epsilon_n^{i_n}) = (i_1,\cdot \cdot \cdot, i_n)$ if  $a_{i_1 \cdot \cdot \cdot i_n} \ne 0$,
 
 $v(\sum_{(i_1 \cdot \cdot \cdot i_n)}a_{i_1 \cdot \cdot \cdot i_n} \epsilon_1^{i_1} \cdot \cdot \cdot \epsilon_n^{i_n}) = $ min $\{(i_1,\cdot \cdot \cdot , i_n),  a_{i_1 \cdot \cdot \cdot i_n} \ne 0\}$.
 
 Such a function, as above, can be extended to a valuation $v_n: \KK \to \ZZ^n$.
 
 Step 3. Set: $A = \cup A_n, \KK = \cup \KK_n, G = \ZZ ^{\NN}$ (with lexicographic order) and define $ v:\KK \to G $ as the function such that $v_|(\KK_n) = v_n, \forall n \in \ZZ$. Then set: $v(0) = \infty$. It is clear that $v$ is a valuation whose group is $G$.
 
 Step 4. $v$ can be extended (in at least one way) to the algebraic closure $\bar \KK$ and the corresponding group is the smallest divisible group $\bar G$ containing $G$, i.e. $\bar G$ can be identified with $\QQ^{\NN}$.
  
 We notice that  $\bar G$ is a non-Archimedean countable ordered group. Therefore the topology of $\KK$, as well as the topology of its completion $\hat{\bar \KK}$ as a valued field{\color{green},} is countable. 
 
 It is worth observing that  this  topology has a countable basis for the neighbourhoods of $0$. Moreover $\KK$ has no topologically nilpotent element.

\end{example}

A series $\sum a_n, a_i \in \LL$ converges if and only if $\lim a_n = 0$. The {\it{only if}} implication is true in general because, $a_n = \sum_0^na_i - \sum_0^{n-1}a_i$. As for the {\it{if}} implication, it is enough to point out that $v(\sum_N^M a_i) \ge $ min $\{v(a_N), \cdot \cdot \cdot, v(a_M)\} \ge \gamma, \forall N \gg 0$.

  If $S(X) = \sum a_nX^n \in \LL[[X]]$ and $x \in \LL$, then $S(x) = \sum a_nx^n$, if existing, is an element of $\LL$. In particular, if $x \in \KK$ and $S(x)$ is converging,  then  $S(x) \in \LL$. 
  
  
   \begin{lemma}\label{lem:Uniforme} If a power series $S(X) = \sum a_nX^n \in \LL[[X]]$  converges at $x$, then it converges  at every $y$ such that $v(x) \le v(y)$ and such a convergence is uniform.
   
 \end{lemma}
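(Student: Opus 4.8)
The plan is to reduce the whole statement to the elementary convergence criterion recalled just before the lemma, namely that in the complete field $\LL$ a series converges if and only if its general term tends to $0$, i.e. its valuation tends to $\infty$. Under this criterion, saying that $S$ converges at $x$ means exactly that $v(a_n x^n) = v(a_n) + n\,v(x) \to \infty$. First I would record this reformulation, since it turns the qualitative assertion into a statement about the sequence $\bigl(v(a_n) + n\,v(x)\bigr)_n$ in $G$.

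The key point is then a monotonicity estimate. Fix $y$ with $v(y)\ge v(x)$. Since $n\ge 0$ and $G$ is an ordered group, multiplication by the nonnegative integer $n$ preserves the inequality $v(y)\ge v(x)$, so $n\,v(y)\ge n\,v(x)$ and therefore
$$v(a_n y^n) = v(a_n) + n\,v(y) \ \ge\ v(a_n) + n\,v(x) = v(a_n x^n)$$
for every $n$. Because the right-hand side tends to $\infty$, so does the left-hand side; hence $a_n y^n \to 0$ and $S$ converges at $y$. This settles pointwise convergence at every $y$ with $v(y)\ge v(x)$.

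For uniformity on the disk $D = \{\,y\in\LL : v(y)\ge v(x)\,\}$ I would combine the ultrametric inequality for finite sums with the estimate just obtained. Given any $\gamma\in G$, convergence at $x$ furnishes an index $N$ with $v(a_n x^n)\ge\gamma$ for all $n\ge N$. The decisive observation is that this single $N$ works simultaneously for every $y\in D$: for $M\ge N$ and any $y\in D$,
$$v\Bigl(\sum_{n=N}^{M} a_n y^n\Bigr)\ \ge\ \min_{N\le n\le M} v(a_n y^n)\ \ge\ \min_{N\le n\le M} v(a_n x^n)\ \ge\ \gamma,$$
where the first inequality is the non-Archimedean estimate and the second is the monotonicity proved above. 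Thus the tails of the partial sums are bounded below by $\gamma$ uniformly in $y\in D$, which is precisely the assertion of uniform convergence.

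There is no genuine obstacle here; the lemma is elementary. The only point that deserves care is that the ordered-group structure of $G$ is genuinely used, through the fact that multiplication by the nonnegative integer $n$ is order-preserving, in order to pass from $v(y)\ge v(x)$ to $v(a_n y^n)\ge v(a_n x^n)$. Everything else rests on the ultrametric inequality, whose force is exactly that one index $N$ controls the entire tail uniformly across the whole disk $D$.
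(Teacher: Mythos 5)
Your proof is correct and follows essentially the same route as the paper's: termwise monotonicity $v(a_ny^n)\ge v(a_nx^n)$ from $v(y)\ge v(x)$ gives pointwise convergence via the criterion $\lim a_nx^n=0$, and the ultrametric bound $v\bigl(\sum_{n=N}^M a_ny^n\bigr)\ge\min_n v(a_ny^n)\ge\min_n v(a_nx^n)\ge\gamma$ with a single $N$ gives uniformity. If anything, your version is slightly cleaner in making explicit that the index $N$ is chosen from the termwise condition at $x$, which the paper uses implicitly.
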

  
\begin{proof}Assume that there is $x \in \LL$ such that $S(X)$ is convergent at $x$ to $S(x) \in \LL$. Then it holds : $\lim a_nx^n = 0$, i.e., given $\alpha \in G$, there is $N$ such that $\forall n > N, v(a_nx^n) = v(a_n)+nv(x) > \alpha$. If $ v(x) \le  v(y)$, then $v(a_ny^n) = v(a_n)+nv(y) > \alpha$, so that $S(X)$ is also convergent at $y$.

As for uniform convergence, let $\gamma \in G$ be any element, then there is $N$ such that, $\forall M > N, v(\sum_N^M a_ix^i) \ge \gamma.$ As a consequence, for every $y$ such that $v(y) \ge v(x)$ $$v(\sum_N^M a_iy^i) \ge \min_{N \le i \le M}(v(a_i)+iv(y)) \ge \min_{N \le i \le M}(v(a_i)+iv(x)) \ge \gamma.$$

 \end{proof}

\begin{theorem}\label{teo:IVTuniforme} Let $(f_n(X): D = D_{\alpha,\beta} \to \LL, N \in \NN)$ be a sequence of functions, where $D_{\alpha,\beta} = \{x \in \KK, \alpha \le v(x) \le \beta\}$. Let us assume that the sequence converges uniformly on $D$ to a function $f(X): D \to \LL$.

If  each $f_n(X)$ satisfies IVT on $D$, then the same is true for $f(X)$.

\end{theorem}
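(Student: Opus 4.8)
The plan is to exploit the ultrametric nature of the valuation, which makes the passage to the limit \emph{exact} rather than merely asymptotic: a sufficiently small perturbation does not just approximate a value, it preserves its valuation outright. Concretely, I would use repeatedly the strict ultrametric identity asserting that if $v(w) > v(u)$ then $v(u+w) = v(u)$. This is what lets me avoid any genuine limiting argument on a sequence of witnesses; a single $f_n$ with $n$ large enough will already furnish the point required for $f$.

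First I would fix $a,b \in D$ with $v(f(a)) > 0$ and $v(f(b)) < 0$ (relabelling if necessary so that the positive value is attained at $a$), and set $\gamma = v(f(a))$. By uniform convergence of $(f_n)$ to $f$ on $D$, there is an index $N$ such that $v(f_n(x) - f(x)) > \gamma$ for every $n \ge N$ and every $x \in D$. I then fix one such $n \ge N$ and transfer the sign hypotheses from $f$ to $f_n$. At $x = a$ we have $v(f_n(a) - f(a)) > \gamma = v(f(a))$, so the ultrametric identity gives $v(f_n(a)) = v(f(a)) > 0$; at $x = b$ we have $v(f_n(b) - f(b)) > \gamma = v(f(a)) > 0 > v(f(b))$, hence likewise $v(f_n(b)) = v(f(b)) < 0$. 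Since $a,b \in D$ and $f_n$ satisfies IVT on $D$, there is $c \in D$ with $v(f_n(c)) = 0$ and $v(c) \in [\{v(a),v(b)\}]$.

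Finally I would push this point back to $f$. Because $c \in D$, the same uniform estimate yields $v(f(c) - f_n(c)) > \gamma > 0 = v(f_n(c))$, so once more the ultrametric identity forces $v(f(c)) = 0$, while the location condition $v(c) \in [\{v(a),v(b)\}]$ is inherited unchanged. Thus $c$ witnesses IVT for $f$, completing the argument. The only point that needs care---and the closest thing to an obstacle---is the bookkeeping of the inequalities: one must choose the single threshold $\gamma = v(f(a))$ so as to dominate the perturbations at $a$, at $b$, and at the produced point $c$ simultaneously. This works precisely because $v(f(b)) < 0 < v(f(a))$, so the same $\gamma$ exceeds all three differences at once, and no passage to the limit on the $c_n$ is ever needed.
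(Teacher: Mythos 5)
Your proposal is correct and takes essentially the same route as the paper: use uniform convergence so that a single $f_n$ (no limiting argument on witnesses) inherits the valuations of $f$ at $a$ and $b$ via the ultrametric identity, apply IVT to that $f_n$ to get $c$, and transfer $v(f_n(c))$ back to $v(f(c))$ by the same identity. The only difference is that the paper's proof handles a slightly more general situation (an arbitrary intermediate value $\gamma$, including the case $f(b)=0$, i.e. $v(f(b))=\infty$, which is what the later application to power series uses), but the mechanism is identical to yours.
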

\begin{proof}

First of all we notice that if $\lim_{n \to \infty} h_n = h \ne 0, h_n, h \in \KK$, then there is $N \in \NN$ such that $v(h) = v(h_n), \forall n \ge N$. Indeed choose $\delta \in G, \delta > v(h)$. Then there is $N \in \NN$ such that $v(h-h_n) > \delta, \forall n \ge N$. It follows that $v(h) = v(h_n)$.

 Let us choose $a,b \in D$ and assume  that $v(f(a)) < \gamma < v(f(b)) \leq \infty$. 

If $v(f(b)) < \infty$,  we choose  $\delta > v(f(b))$ and observe that, by the hypothesis on the uniform convergence of the sequence, there exists  $n(\delta)$ such that
$$ n>n(\delta) \Longrightarrow  v(f_n(x)) -v(f(x)) > \delta, \forall x \in D_{a,b}.$$
Now, let us fix any $N>n(\delta)$. We have:
$$v(f_N(a)) = v(f(a)), \  v(f_N(b)) = v(f(b));$$
moreover, by the I.V.T. condition on $f_N(x)$, there exists $c_N \in D_{a,b}$ such that  $f_N(c_N) = \gamma$.

As  $v(f_N(c_N) - f(c_N)) >\delta > \gamma$ , we get $v(f(c_N)) = \gamma$.

Analogously, if $v(f(b)) = \infty$, there exists $n(\gamma)$ such that 
$$n> n(\gamma) \Longrightarrow v(f_n(x) - f(x)) > \gamma, \  \forall x \in D_{a,b}.$$ Moreover $\exists m$ such that
$$n > m \Longrightarrow v(f_n(b)) > \gamma > v(f_n(a)).$$

As in the previous case, IVT, applied to any $N> $ max $\{n(\gamma), m\}$, implies that there exists $c_N \in D_{a,b}$ such that  $v(f_N(c_N))=\gamma$  and, as a consequence, $v(f(c_N)) = \gamma$.

So, in both cases, for every  sufficiently large $N$, we have at least one element $c_N$ satisfying IVT  on $D_{a,b}$.

  \end{proof}

From Lemma \ref{lem:Uniforme} and Theorem \ref{teo:IVTuniforme} we deduce IVT in $\KK$ for every power series belonging to $\LL[[X]]$.

\begin{theorem} \label{teo:IVTseries} Let $S(X) = \sum b_nX^n \in \LL[[X]]$ be a power series and let $a,b \in \KK$  such that $v(a) \le v(b)$. We assume that $S(X)$   is convergent at $a$ and  that $S(a) \ne S(b)$. If  $\alpha \in [\{v(S(a)),v(S(b))\}]$, then  there is $c \in \KK$ such that $v(S(c)) = \alpha$ and moreover $v(c)$ belongs to $[v(a),v(b)].$
\end{theorem}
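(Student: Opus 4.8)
The plan is to realize $S(X)$ as a uniform limit of its polynomial partial sums on the slab $D = D_{v(a),v(b)} = \{x \in \KK : v(a) \le v(x) \le v(b)\}$, and then to invoke the transfer principle of Theorem \ref{teo:IVTuniforme}, using the fact that each partial sum is a polynomial to which the polynomial IVT (Theorem \ref{prop:dueotto}) applies. In this way the entire analytic content is pushed onto the two results already established, and the proof becomes a direct assembly.

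First I would set $f_n(X) = \sum_{k=0}^{n} b_k X^k \in \LL[X]$. Since $S(X)$ converges at $a$ and $v(a) \le v(b)$, Lemma \ref{lem:Uniforme} guarantees that $S(X)$ converges at every $y$ with $v(y) \ge v(a)$ --- in particular at $b$ and at every point of $D$ --- and that this convergence is uniform on $\{y : v(y) \ge v(a)\} \supseteq D$. Hence $(f_n)$ is a sequence of functions $D \to \LL$ converging uniformly on $D$ to the well-defined function $S : D \to \LL$, so the hypotheses on the approximating sequence in Theorem \ref{teo:IVTuniforme} are met.

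Next I would observe that each $f_n$, being a polynomial in $\LL[X]$, satisfies IVT on $D$ in exactly the sense demanded by Theorem \ref{teo:IVTuniforme}: for any two points of $\KK$ and any value strictly between the two valuations $v(f_n(\cdot))$, Theorem \ref{prop:dueotto} produces a point $c$ with the prescribed valuation and with $v(c)$ in the appropriate closed interval. Applying Theorem \ref{teo:IVTuniforme} to the sequence $(f_n)$ and its limit $S$ then yields, for every $\alpha$ strictly between $v(S(a))$ and $v(S(b))$, an element $c \in D_{a,b} = D$ with $v(S(c)) = \alpha$ and $v(c) \in [v(a),v(b)]$.

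It then remains only to dispose of the boundary values $\alpha = v(S(a))$ and $\alpha = v(S(b))$, which lie in the closed interval $[\{v(S(a)),v(S(b))\}]$ but fall outside the strict chain hypothesis of Theorem \ref{teo:IVTuniforme}; these are handled trivially by taking $c = a$ and $c = b$ respectively (the latter also covering the degenerate case $S(b) = 0$, i.e. $v(S(b)) = \infty$). I do not expect a genuine obstacle in this argument; the only care needed is bookkeeping. If $v(S(a)) > v(S(b))$ one must apply Theorem \ref{teo:IVTuniforme} with the roles of the two endpoints interchanged, so that the strict chain $v(f(\cdot)) < \gamma < v(f(\cdot))$ runs in the correct direction, and one must note that the sub-slab $D_{a,b}$ returned by that theorem is precisely $D$ (since $v(a) \le v(b)$ forces $D_{a,b} = [v(a),v(b)]$ at the level of valuations), so that the stated conclusion $v(c) \in [v(a),v(b)]$ holds verbatim.
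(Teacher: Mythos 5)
Your proof is correct and follows essentially the same route as the paper: the paper deduces Theorem \ref{teo:IVTseries} precisely from Lemma \ref{lem:Uniforme} (uniform convergence of the partial sums on the region $v(y)\ge v(a)$) combined with the transfer principle of Theorem \ref{teo:IVTuniforme} applied to the polynomial partial sums, each of which satisfies IVT by Theorem \ref{prop:dueotto}. Your explicit treatment of the endpoint values $\alpha = v(S(a))$, $\alpha = v(S(b))$ and of the degenerate case $v(S(b)) = \infty$ merely spells out bookkeeping that the paper leaves implicit; note that the paper also offers a second, independent proof via Hensel's Lemma for restricted power series in Section 4, but that is a different argument from yours.
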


  \begin {remark} \label {generalizzazione} If $S(X)$ takes values in an overfield $\KK'$ of $\KK$, whose valuation extends the valuation  of $\KK$, still satisfying conditions (\ref{eq:Hypo})  but not necessarily complete, we can replace $\KK'$ by its completion $\LL$ and apply Theorem \ref{teo:IVTseries} , choosing $\gamma$ in $\KK' \subset\LL$.  So Theorem \ref{teo:IVTseries} is true also when $\LL$ is not  complete.
  \end{remark}

\section{Hensel's Lemma and applications}
\subsection{IVT for power series}
 In this section we give an alternative proof of Theorem \ref{teo:IVTseries} based on Hensel's Lemma.
 
We recall  (see \cite[\S 3]{Greco-Salmon}) the following 

\begin{definition}  $S(X) = \sum a_nX^n \in A[[X]]$ is \emph{restricted} if $ \lim_{n \to \infty} a_n = 0$. The ring of restricted power series is denoted by $A\{X\}$.

A restricted power series is called \emph{regular} if at least one of its coefficients is a unit.

\end{definition} 

$S(X) = \sum a_nX^n \in A[[X]]$   is  restricted if and only if it is convergent everywhere on $A$; in this event the convergence is uniform (Lemma \ref{lem:Uniforme} with $x = 1$).

In what follows we will use a strong version of Hensel's Lemma which holds true for restricted power series. We start with the following 

\begin{definition} \label{def:henselian}
A local ring $(A,M)$ is called \emph{henselian} if the following property holds:

Let $P(X) \in A[X]$ be a polynomial such that its canonical image $\bar {P}(X)$ into the quotient  ring $\frac{A}{M}[X]$ is the product $\bar{Q}(X) \bar{T}(X)$ of a monic polynomial $\bar{Q}(X)$ and another polynomial $\bar{T}(X)$, the two factors being coprime. Then $P(X) = Q(X)T(X)$, where $Q(X)$ is a monic polynomial that lifts $\bar{Q}(X)$ and $T(X)$ is a polynomial that lifts $\bar{T}(X)$. Moreover $P(X),Q(X)$ are uniquely determined and coprime.

\end{definition}

Hensel's Lemma states that a local ring $(A,M)$ that is complete with respect to the $M$-topology  is henselian (Nagata).  

It is worth to point out that, if $(A,M)$ is a valuation ring, the valuation topology differs from the $M$-adic topology unless the rank is $1$. In general the completion with respect to the valuation topology does not satisfy Hensel's Lemma (see \cite{EP}, p. 50). However, every valued field which is "stage complete"  is simultaneously complete and henselian (see \cite{Ribenboim1} p. 74 ex. 6 and p. 198 Th. 4).  Moreover every $\KK$ admits a \lq\lq stage complete \rq\rq extension  (see \cite[p. 88, Th. 1,  p. 112, Cor. 2 ]{Ribenboim1}).

In this section we assume that $\LL \supset \KK$ has a  topology with a countable basis and that it is henselian and complete (such properties can be attained by considering the "stage completion" of $\KK$). 

Hensel's Lemma can also be given for restricted power series.

\begin{lemma} \label 
{lem:HenselRestrictedSeries} (Hensel's Lemma for restricted power series) Let $A$ be a complete separated ring with respect to a linear topology, $M$ a closed ideal. Assume that Hensel's Lemma for polynomials holds true in $A$. Let $S(X)$ be a restricted power series such that its canonical image $\bar {S}(X)$ into the topological quotient  ring $\frac{A}{M}\{X\}$ is the product $\bar{P}(X) \bar{T}(X)$ of a monic polynomial $\bar{P}(X)$ and a restricted series $\bar{T}(X)$, the two factors being coprime. Then $S(X) = P(X)T(X)$, where $P(X)$ is a monic polynomial that lifts $\bar{P}(X)$ and $T(X)$ is a restricted series that lifts $\bar{T}(X)$. Moreover $P(X),T(X)$ are uniquely determined and coprime.
\end{lemma}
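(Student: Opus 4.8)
The plan is to establish existence by a Newton-type successive approximation that lifts the given factorization through a decreasing fundamental system of open ideals and sums the corrections using completeness of $A$; the conceptual engine is division of a restricted series by the monic polynomial, which pins the problem down to a polynomial-level factorization to which the hypothesis (polynomial Hensel in $A$) applies, while the restricted tail is carried along. Uniqueness and coprimality of the lifts will then be deduced from the coprimality of $\bar P,\bar T$ modulo $M$.

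First I would fix a monic lift $P_0 \in A[X]$ of $\bar P$ of degree $d = \deg \bar P$ and a restricted lift $T_0 \in A\{X\}$ of $\bar T$, so that the defect $E_0 = S - P_0 T_0$ has all coefficients in $M$. The heart is the correction step: given a monic $P_k$ of degree $d$ and a restricted $T_k$ whose defect $E_k = S - P_k T_k$ has coefficients in an open ideal $J_k \subseteq M$, I produce a correction pair consisting of a polynomial $p$ of degree $< d$ and a restricted series $t$, both with coefficients in $J_k$, that solves the linearized equation $p\,T_k + P_k\,t \equiv E_k$ modulo a deeper open ideal $J_{k+1}$. This is exactly where coprimality enters: a B\'ezout relation $\bar a\,\bar P + \bar b\,\bar T = 1$ in $(A/M)\{X\}$, combined with Euclidean division of a restricted series by the monic $\bar P$ (restricted quotient, polynomial remainder of degree $< d$), solves the linearized equation over $A/M$ with the required degree and restrictedness constraints; one then lifts these solutions into $A$. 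Setting $P_{k+1} = P_k + p$ and $T_{k+1} = T_k + t$ keeps $P_{k+1}$ monic of degree $d$ and $T_{k+1}$ restricted, and pushes the defect into $J_{k+1}$.

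Passing to the limit, the series $P = P_0 + \sum_k p^{(k)}$ and $T = T_0 + \sum_k t^{(k)}$ converge because $A$ is complete and the corrections have coefficients in the $J_k$, which form a null sequence; $P$ is then a monic polynomial of degree $d$ lifting $\bar P$, $T$ is a restricted series lifting $\bar T$, and $\bigcap_k J_k = 0$ (separatedness) forces $S = PT$. For the final clause I would first lift the B\'ezout relation: coprimality of $\bar P,\bar T$ together with completeness yields $U,V \in A\{X\}$ with $UP + VT = 1$, so the lifts $P,T$ are coprime; then, if $PT = P'T'$ are two factorizations of the required type, the difference $P - P'$ has coefficients in $M$, and multiplying by $U,V$ and iterating through the $J_k$ gives a contraction argument forcing $P = P'$ and hence $T = T'$.

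The main obstacle is the convergence bookkeeping in a \emph{general} linear topology rather than an $M$-adic one: the nonlinear cross term $p^{(k)} t^{(k)}$ lies a priori only in $J_k \cdot J_k$, which need not be contained in a deeper open ideal, so the fundamental system $\{J_k\}$ must be chosen compatibly with the iteration in order to absorb this term and make the defect genuinely decrease; and one must separately verify that the assembled $T$ is \emph{restricted}, i.e. that its coefficients tend to $0$ — an interchange between the limit in the coefficient index and the limit in the approximation index that does not come for free. Securing the restricted-series Euclidean division by the monic polynomial $\bar P$ is the key enabling lemma that makes each linearized correction solvable, and it is the place where the restricted (rather than merely formal) nature of the factor $T$ is ultimately controlled.
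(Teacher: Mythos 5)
Your proposal contains a genuine gap, and it is precisely the one you flag in your last paragraph --- but it is not a matter of ``convergence bookkeeping'': it is irreparable as stated. The quadratic mechanism that makes the classical Newton iteration work is $J_k\cdot J_k\subseteq J_{k+1}$ (in the $M$-adic case one takes $J_k=M^{2^k}$), and a general linearly topologized ring admits \emph{no} fundamental system of open ideals with this property. Even in the motivating case of a valuation topology, where $J_\alpha=\{x : v(x)>\alpha\}$ does satisfy $J_\alpha\cdot J_\alpha\subseteq J_{2\alpha}$, the sequence $2^k\alpha$ need not be cofinal in the value group once the rank is at least $2$ (e.g.\ $\alpha=(0,1)$ in $\ZZ\times\ZZ$ ordered lexicographically); the defects $E_k$ then all lie in the fixed nonzero ideal $\bigcap_k J_{2^k\alpha}$, so separatedness only yields $S\equiv PT$ modulo that ideal, not $S=PT$. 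There is also an initial-step problem: $E_0=S-P_0T_0$ has coefficients in $M$, which is assumed closed but not open, so there need not exist any open ideal $J_0\subseteq M$ containing those coefficients, and the iteration cannot even start.

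The decisive structural objection is that your iteration never actually uses the standing hypothesis that Hensel's Lemma for polynomials holds in $A$: despite the announcement in your overview, the linearized correction step (B\'ezout relation plus Euclidean division, then lifting) is entirely self-contained. If the argument were sound, it would therefore prove restricted-series Hensel --- hence in particular polynomial Hensel, since polynomials are restricted series --- for \emph{every} complete separated linearly topologized ring with closed $M$. That is false: the paper itself recalls (citing \cite{EP}, p.~50) that the completion of a valued field with respect to the valuation topology need not be henselian, although its valuation ring satisfies all of these topological hypotheses. So the polynomial hypothesis must enter as a black box and cannot be rederived by iteration; the same objection applies to your uniqueness argument, which again relies on a contraction through the $J_k$. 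Note that the paper gives no new proof of this lemma at all: it cites \cite{Bourbaki}, \S 4, Th\'eor\`eme 1, for the case where $M$ consists of topologically nilpotent elements --- the one setting where an iteration of your type genuinely converges --- and \cite{Valabrega}, Teorema 5, for the general case, whose whole point is to replace the convergence argument by an appeal to polynomial Hensel. To repair your proof you would have to make the first paragraph of your plan precise: use Euclidean division by a monic lift to reduce the factorization of $S$ to a polynomial factorization problem over $A$, and invoke the hypothesis once, instead of iterating corrections.
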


\begin{proof} With topologically nilpotent elements of $M$, it is \cite{Bourbaki}, \S 4, p. 84, Th\' eor\`eme 1. Without topologically nilpotent elements it is \cite{Valabrega}, Teorema 5.
\end{proof}

\begin{corollary} \label{cor:Salmon} Let $\LL$ be henselian and complete. Let $S(X) = \sum_{n = 0}^{\infty} a_nX^n$ be a restricted power series over $A$ such that the partial sum $S_N(X)$ is a monic polynomial for some $N$ and moreover $a_{N+h} \in M, \forall h \ge 1$. Then $S(X) = P(X)B(X)$, where $P(X)$ is a monic polynomial such that $P(X) = S_N(X)$ mod $M$ and $B(X) \in 1+M\{X\}$ is a restricted power series.
\end{corollary}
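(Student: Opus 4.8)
The plan is to derive the factorization as a direct application of Hensel's Lemma for restricted power series (Lemma~\ref{lem:HenselRestrictedSeries}) to the reduction of $S(X)$ modulo $M$. The standing hypotheses of this section already guarantee that the ambient ring $A$ is complete and separated for its linear topology, that $M$ is a closed ideal, and that Hensel's Lemma for polynomials holds in $A$ (since $\LL$ is henselian and complete); hence Lemma~\ref{lem:HenselRestrictedSeries} is available, and the whole argument reduces to checking that its hypotheses are met by a suitable \emph{trivial} factorization over the residue field.

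First I would reduce $S(X)$ modulo $M$. Because $a_{N+h} \in M$ for every $h \ge 1$, all coefficients of $\bar{S}(X)$ of degree strictly greater than $N$ vanish; and because $S_N(X)$ is monic, the coefficient of $X^N$ reduces to $1$. Consequently $\bar{S}(X) = \bar{S}_N(X)$ is a monic polynomial of degree $N$ in $\frac{A}{M}[X]$, regarded as an element of the restricted series ring $\frac{A}{M}\{X\}$. This is the one computation that uses the hypothesis, and it is exactly what makes the reduction an honest polynomial rather than a genuine series.

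Next I would write the tautological factorization $\bar{S}(X) = \bar{S}_N(X) \cdot 1$, taking $\bar{P}(X) := \bar{S}_N(X)$ as the monic polynomial factor and $\bar{T}(X) := 1$ as the restricted series factor. These two factors are coprime for the cheapest possible reason: any ideal containing both contains the unit $1$. Thus all the hypotheses of Lemma~\ref{lem:HenselRestrictedSeries} are satisfied. Applying the lemma produces a factorization $S(X) = P(X)T(X)$ in which $P(X)$ is a monic polynomial lifting $\bar{S}_N(X)$, so that $P(X)$ reduces to $S_N(X)$ modulo $M$, and $T(X)$ is a restricted series lifting $\bar{T}(X) = 1$, so that $T(X) - 1 \in M\{X\}$, i.e. $T(X) \in 1 + M\{X\}$. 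Setting $B(X) := T(X)$ then gives precisely the asserted decomposition.

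I do not expect a genuine obstacle here: the entire content is the observation that the hypothesis $a_{N+h} \in M$ for all $h \ge 1$ collapses the mod-$M$ reduction to a monic polynomial, turning the coprime factorization demanded by Hensel's Lemma into the trivial one against the constant $1$. The only points requiring a word of care are verifying that $1$ counts as a (coprime) restricted series factor and that ``lifting $1$'' is the same as lying in $1 + M\{X\}$, both of which are immediate from the definitions; the nontrivial analytic work has already been absorbed into Lemma~\ref{lem:HenselRestrictedSeries}.
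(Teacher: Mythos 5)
Your proof is correct and follows essentially the same route as the paper: the paper's own proof cites Salmon's Th\'eor\`eme 10 but explicitly notes that its content is just Hensel's Lemma for restricted power series (Lemma~\ref{lem:HenselRestrictedSeries}) applied to the decomposition $\bar S(X) = \bar S_N(X)\cdot \bar 1$, which is exactly the trivial coprime factorization you use. Your write-up simply makes explicit the details (reduction mod $M$ collapsing to a monic polynomial, coprimality with $1$, and lifting $1$ to $1+M\{X\}$) that the paper delegates to the citation.
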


\begin{proof} This is essentially \cite[ Th\' eor\`eme 10]{Salmon}. In fact the proof of this theorem makes only use of Hensel's Lemma for restricted power series, applied to the decomposition (mod $M$): $\bar S(X) = \bar S_N(X) \cdot \bar 1$. Salmon's proof works with topologically nilpotent elements, but by using \cite{Valabrega}, Teorema 5, this condition can be avoided. 

 \end{proof}

\bigskip

\bigskip

Let now $\KK$ be any valued field satisfying conditions $(i)$ and $(ii)$ of Proposition \ref{prop:duenove} and let $\LL$ be any complete henselian extension of $\KK$, having the same value group as $\KK$. Then Hensel's Lemma (Lemma \ref{lem:HenselRestrictedSeries})
 holds true for any restricted power series $S(X) \in A_L\{X\}$.

In what follows we will need, starting with a point $a \in \LL, a \ne 0$,  to define a trasformation of a power series converging at $a$ into a restricted regular one. Therefore we introduce the following 

\noindent{\it{Construction.}}

Let $S(X)\in \LL[X]$ be a series converging at $a\in \LL, a \ne 0$. We define \begin{equation}\label{eq:construction} S_a(X)=h_aS(aX)\in A_\LL\{X\}\end{equation} where $h_a\not=0$ is such that $\overline{S_a(X)}$ is a monic polynomial, i.e. $S_a(X) $ is a regular restricted series.\\
  Since $S(aX)$ converges at $X = 1$, the coefficients whose valuation is less than or equal to a fixed value form a finite set.  Then the element $h_a$ can be chosen observing that $h_a^{-1}$ must be any coefficient of $S(aX)$ having lowest valuation.

By Corollary \ref{cor:Salmon}  $$S_a(X)=P_a(X)B_a(X)$$ where $P_a(X)$ is a polynomial and $B_a(X)\in A_\LL\{X\}$ is such that $\overline{B_a}(X)=\mathbf{1}$.

If $x\in A_\LL$ then $S_a(x)$ converges and $v(S_a(x))=v(P_a(x))$. Therefore
if $v(a')\geq v(a)$
\begin{equation}
\label{eq:Sa}
v(S(a'))=v(h_a^{-1}S_a\left(\frac{a'} a\right))=v(P_a\left(\frac{a'} a\right ))-v(h_a).\end{equation}

We are now ready for an 

\noindent{\it{Alternative proof of Theorem \ref{teo:IVTseries}}}

We use the above construction, consider $S_a(X)$ and observe that our theorem is proved if we show that $\exists c' = \frac{c}{a} \in \KK, 0 \le v(c') \le v(\frac{b}{a})$ such that  $v(h_a)+\alpha = v(S_a(c'))$. Now, by Hensel's Lemma $S_a(X) = P_a(X)B_a(X)$, where $v(B_a(x)) = 0, \forall x \in A_L$. This implies that IVT for the power series is equivalent to IVT for the polynomial $P_a(X)$, which has been proved in section \ref{sect:due} (Theorem \ref{prop:dueotto}).

\subsection{The behaviour of $v(x)\mapsto v(S(x))$ for power series}
Let $\KK$ be a valued field satisfying conditions (\ref{eq:Hypo})
 with a countable basis.
In this section, we extend Theorem \ref{teo:polynomialbehaviour}  to the power series case (see Theorem \ref{teo:seriesbehaviour} below).

Let $\LL$ be a complete henselian extension of $\KK$, with the same value group $G$.
Let $S(X)\in \LL[[X]]$ be a series. 
Let $\KK'$ be any valued extension of $\LL$  with value group $G'\supseteq G$; for any $\alpha\in G'$ define:
\begin{itemize}
\item $C_{\alpha,\KK'} =\{x\in\KK' \ |\ v(x)=\alpha \}$
\item $D_{S,\KK'}=\{x\in\KK' \ |\ S(X) \hbox{ converges at }  x\}$
\item $Z_{S,\KK'}=\{x\in D_{S,\KK'} |\ S(x)=0 \}$
\end{itemize}

\begin{proposition} Suppose that $\KK'$ satisfies the following condition:
\begin{equation}\label{eq:condiz} \hbox{ if $x\in D_{S,\KK'}$, then there exists $a\in D_{S,\LL}$ such that $v(a)\leq v(x)$.} \end{equation}
Then every element in $Z_{S,\KK'}$ is algebraic over $\LL$.

\end{proposition}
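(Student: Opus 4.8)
The plan is to reduce the claim to the assertion that $x$ is a root of an honest polynomial with coefficients in $\LL$, by exploiting the \emph{Construction} preceding equation (\ref{eq:Sa}) together with the Hensel factorisation of Corollary \ref{cor:Salmon}. The case $x=0$ is trivial, since $0\in\LL$; so I assume $x\in Z_{S,\KK'}$ with $x\neq 0$, whence $v(x)<\infty$. By hypothesis (\ref{eq:condiz}) there is $a\in D_{S,\LL}$ with $v(a)\leq v(x)$, and since $v(x)<\infty$ we may take $a\neq 0$. The Construction then applies to the point $a\in\LL$: setting $S_a(X)=h_aS(aX)\in A_\LL\{X\}$ produces a regular restricted series, and Corollary \ref{cor:Salmon} gives $S_a(X)=P_a(X)B_a(X)$ with $P_a(X)$ a monic polynomial over $A_\LL$ and $B_a(X)\in 1+M_\LL\{X\}$.

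Next I would evaluate this factorisation at $x'=x/a$. Since $v(x')=v(x)-v(a)\geq 0$ we have $x'\in A_{\KK'}$, and from $S_a(X)=h_aS(aX)$ together with $ax'=x$ one gets $S_a(x')=h_aS(x)$; as $x\in D_{S,\KK'}$ this converges, and as $S(x)=0$ it equals $0$. The issue is to transfer the factorisation to this evaluation. Because $\KK'$ need not be complete, I would pass to its completion $\widehat{\KK'}$ (to which $v$ extends canonically), in which the restricted series $B_a$ converges at $x'$: indeed $B_a(x')=1+\sum_{n\geq 1}b_n(x')^n$ with $b_n\in M_\LL$, so every term with $n\geq 1$ has valuation $\geq v(b_n)>0$, these valuations tend to $\infty$, and hence $v\bigl(\sum_{n\geq 1}b_n(x')^n\bigr)>0$, giving $v(B_a(x'))=0$ and in particular $B_a(x')\neq 0$.

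The formal identity $S_a=P_aB_a$ then yields the numerical identity $0=S_a(x')=P_a(x')B_a(x')$ in $\widehat{\KK'}$, and since $B_a(x')$ is a unit I conclude $P_a(x')=0$. As $P_a$ is monic, hence a nonzero polynomial with coefficients in $\LL$, this exhibits $x'$ as an element algebraic over $\LL$; moreover the relation $P_a(x')=0$ already lives in $\KK'$ (all its terms lie in $\KK'$), so completeness of $\KK'$ is not really needed for the conclusion itself. Finally $x=ax'$ with $a\in\LL$, so $x$ is algebraic over $\LL$ as well, which is the assertion for every element of $Z_{S,\KK'}$.

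The main obstacle I anticipate is precisely the convergence bookkeeping for $B_a(x')$ and the legitimacy of multiplying the evaluated factors when $\KK'$ is not assumed complete; the clean way around it is to perform the evaluation in $\widehat{\KK'}$ and then observe that the resulting polynomial equation $P_a(x')=0$ descends to $\KK'$. A secondary point requiring care is the choice of the centre $a$: hypothesis (\ref{eq:condiz}) is exactly what guarantees an $a\in\LL$ with $v(a)\leq v(x)$, ensuring that $x'=x/a$ lands in the valuation ring and that the restricted-series machinery of Corollary \ref{cor:Salmon} becomes available.
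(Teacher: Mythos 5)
Your proof is correct and follows essentially the same route as the paper's: invoke condition (\ref{eq:condiz}) to get a centre $a\in D_{S,\LL}$, apply the Construction and the Hensel factorisation $S_a=P_aB_a$ of Corollary \ref{cor:Salmon}, and evaluate at $x/a\in A_{\KK'}$ to conclude $P_a(x/a)=0$, hence algebraicity. The additional care you take --- the case $x=0$, passing to $\widehat{\KK'}$ for the convergence of $B_a(x/a)$, and checking $v(B_a(x/a))=0$ so that this factor can be cancelled --- only makes explicit steps that the paper's terser proof leaves implicit.
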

\begin{proof}
Let $x\in Z_{S,\KK'}$ and let $a\in\LL$ as in condition (\ref{eq:condiz}).
We can apply the construction $(\ref{eq:construction})$ and write
$$S(aX)=\frac 1 {h_a}P_a(X)B_a(X)$$
 where $h_a\in\LL$, $P_a(X)$ is a polynomial in $\LL[X]$ and $B_a(X)\in A_\LL\{X\}$ is such that $\overline{B_a}(X)=\mathbf{1}$. Put $y=\frac x a\in A_{\KK'}$. Since $S(ay)=0$ then $P_a(y)=0$ so that $y$ (and therefore $x$) is algebraic over $\LL$.
\end{proof}

Notice that condition \ref{eq:condiz} is obviously satisfied when $G'=G$.

\begin{theorem}\label{teo:seriesbehaviour} The notation being as above, 
\begin{itemize}
\item[a)] Let $\alpha\in G$. If $S$ converges on $C_{\alpha,\KK}$ and there exist $a,b\in C_{\alpha,\KK}$ such that $v(S(a))\not= v(S(b))$ then  there exists $c\in \overline{\LL}$ such that $v(c)=\alpha$ and $S(c)=0$. Moreover for every $\gamma\in G$ lying between $v(S(a))$ and $v(S(b))$ there is an element $c'\in C_{\alpha,\KK}$ such that $v(S(c'))=\gamma$.
\item[b)] Suppose that $S$ is not the zero series and define the set 
$$H_S=\{\alpha\in G\ |\ \exists x\in \overline{\LL} \hbox{ such that } S(x)=0, v(x) = \alpha\}.$$ For every $\alpha \in H_S$ the set
$$\{x\in Z_{S,\overline \LL}\ | v(x) \geq \alpha\}$$
is a finite set. 

As a consequence the set
$$\{\beta\in H_S\ |\ \beta >\alpha\}$$
is a finite set.
\item[c)] $\phi_{S,\KK}:v(x)\mapsto v(S(x))$ is a stepwise linear and non-decreasing function defined on the set $V=v(D_{S,\KK})\setminus H_S$.
\item[d)] Let $\alpha\in v(D_{S,K})$. Then the set $\{v(S(x))\ |\ x\in C_{\alpha,\KK}\}$ is a left-bounded and left-closed interval in $G$.
\item[e)] The map $\phi_{S,\KK}$ can be extended to a continuous function on the whole of $v(D_{S,\KK})$ by choosing for $\alpha\in v(D_{S,\KK})$
$$\phi_{S,\KK}(\alpha)=\min\{v(S(x))\ |\ x\in C_{\alpha,\KK} \}.$$ 
\end{itemize}
\end{theorem}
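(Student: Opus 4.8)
The plan is to reduce each part of Theorem~\ref{teo:seriesbehaviour} to the polynomial case via the \emph{Construction} (\ref{eq:construction}), exploiting the Hensel factorization $S_a(X) = P_a(X)B_a(X)$ with $\overline{B_a}(X)=\mathbf{1}$, so that $v(B_a(x))=0$ for all $x\in A_\LL$. The central observation, already recorded in equation (\ref{eq:Sa}), is that on any set $C_{\alpha,\KK}$ with $\alpha \ge v(a)$ the valuation $v(S(x))$ differs from $v(P_a(x/a))$ only by the constant $-v(h_a)$; hence locally $S$ behaves exactly like the polynomial $P_a$, and every property of $\phi_f$ established in Theorem~\ref{teo:polynomialbehaviour} transfers. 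I would fix once and for all, for each relevant $\alpha$, an element $a\in D_{S,\LL}$ with $v(a)\le \alpha$ (possible since $\KK'=\KK$ gives condition (\ref{eq:condiz}) for free, and for the statements over $\KK$ itself $\alpha$ already lies in $v(D_{S,\KK})$).

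For part (a), I would apply the \emph{Alternative proof of Theorem~\ref{teo:IVTseries}}: the hypotheses $v(S(a))\ne v(S(b))$ translate via (\ref{eq:Sa}) into distinct values $v(P_a(a/a)),v(P_a(b/a))$ on the fiber $v=0$, whence the polynomial IVT (Theorem~\ref{prop:dueotto}) and Lemma~\ref{lem:duesei} produce, for each intermediate $\gamma$, a point $c'$ with $v(c')=\alpha$ and $v(S(c'))=\gamma$; the existence of a root $c\in\overline\LL$ with $v(c)=\alpha$ follows because $P_a$ must have a root of valuation $0$ (otherwise $\phi_{P_a}$ would be single-valued and strictly monotone near $0$, contradicting the two distinct observed values), and scaling by $a$ moves it to valuation $\alpha$. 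For part (c), I would set $V=v(D_{S,\KK})\setminus H_S$ and note that away from the finitely many root-valuations, $S$ looks locally like a polynomial with no root in the relevant fiber, so $\phi_{S,\KK}$ is single-valued; stepwise linearity and monotonicity come from Proposition~\ref{prop:duetre} applied to each $P_a$, with the pieces glued using that the constant shift $-v(h_a)$ only depends on the chosen $a$ and cancels in difference computations. Parts (d) and (e) are then immediate translations of Theorem~\ref{teo:polynomialbehaviour}(b),(c) through the same dictionary, taking the minimum over the fiber $C_{\alpha,\KK}$.

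The genuinely new ingredient, and the \textbf{main obstacle}, is part (b): the finiteness of $\{x\in Z_{S,\overline\LL}\mid v(x)\ge\alpha\}$. Here one cannot simply invoke a single polynomial, since a transcendental series may have infinitely many roots. The plan is to use that $P_a$ has only finitely many roots, each of bounded valuation, and to show that \emph{all} roots of $S$ with valuation $\ge\alpha$ arise as $a$ times roots of this one $P_a$: indeed if $x\in\overline\LL$ satisfies $S(x)=0$ and $v(x)\ge\alpha\ge v(a)$, then $y=x/a\in A_{\overline\LL}$, convergence holds by Lemma~\ref{lem:Uniforme}, and $S_a(y)=0$ forces $P_a(y)=0$ because $B_a(y)$ is a unit (its reduction is $\mathbf 1$). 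Thus the roots in question inject into the finite zero set of $P_a$, giving finiteness; the finiteness of $\{\beta\in H_S\mid\beta>\alpha\}$ then follows since these $\beta$ are among the finitely many distinct valuations of that finite root set. The delicate point to verify carefully is that a \emph{single} choice of $a$ (with $v(a)\le\alpha$) controls \emph{every} root of valuation $\ge\alpha$ simultaneously, so that one finite polynomial $P_a$ suffices; this is exactly where the restricted-series Hensel factorization, rather than a pointwise argument, does the essential work.
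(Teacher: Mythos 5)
Your proposal is correct and follows essentially the same route as the paper: each part is reduced to the polynomial case through the Construction (\ref{eq:construction}), formula (\ref{eq:Sa}) and the Hensel factorization $S_a(X)=P_a(X)B_a(X)$, invoking Theorem \ref{teo:polynomialbehaviour}, Theorem \ref{prop:dueotto} and Lemma \ref{lem:duesei} exactly as the paper does. Even your treatment of part (b) is the paper's own argument (the paper centers the construction at a root $a\in\overline{\LL}$ of $S$, while you center it at a convergence point $a\in\LL$ with $v(a)\le\alpha$, which if anything stays closer to the stated hypotheses of the Construction), since in both cases the roots of $S$ of valuation $\ge\alpha$ inject into the finite zero set of a single polynomial $P_a$ because $B_a$ takes unit values on the valuation ring.
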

\begin{proof}
(a) By formula \ref{eq:Sa} we have $v(P_a(\frac b a))\not =v(P_a(1))$ so that there exists $c'\in C_0$ such that $P_a(c')=0$; put $c=c'a$; then $c\in C_\alpha$ and $S(c)=0$. The second assertion is a consequence of Theorem \ref{teo:IVTseries}.\\
(b) Let $\alpha\in H_S$; let  $a\in\overline \LL$ such that $v(a)=\alpha$ and $S(a)=0$.  Let $b\in \overline\LL$ such that $S(b)=0$ and $v(b)=\beta \geq \alpha$. Then $\frac b a\in A_{\overline{\LL}}$ must be a zero  of the polynomial $P_a(X)$ in construction (\ref{eq:construction}) and there are only finitely many such zeros. \\
(c)-(d)-(e) Let $\beta\in v(D_{S,\KK})$ and put $D_{S,\KK,\beta}=D_{S,\KK}\cap\{ x\in \KK\ |\ v(x)\geq \beta\}$. Since $\beta$ is arbitrary, it suffices to show that the claims hold if we replace $D_{S,\KK}$ by $D_{S,\KK,\beta}$. Let $b\in\KK$ be such that $v(b)=\beta$;  if $v(a)\geq \beta$, formula (\ref{eq:Sa}) shows that $v(S(a))$ is simply a translation of $v(P_b(\frac {a}{b}))$  and the result follows from the polynomial case (Theorem \ref{teo:polynomialbehaviour}).

\end{proof}

\begin {remark} \label {generalizzazione 2} Just as happens in Remark\ref{generalizzazione}, $S(X)$ can take its values in any  
  overfield $\KK'$ of $\KK$, whose valuation extends the valuation  of $\KK$, which is still satisfying conditions (\ref{eq:Hypo})  and in this situation the field $\LL$ of Theorem \ref{teo:seriesbehaviour} is any overfield of $\KK'$, that is at the same time henselian and complete. 
\end{remark}

Authors' addresses:\\

Carla Massaza  

Dipartimento di Scienze Matematiche 

Politecnico di Torino 

Corso Duca degli Abruzzi 24 

10129 Torino Italy

email: carla.massaza@polito.it

\bigskip

Lea Terracini

Dipartimento di Matematica

Universit\`a di Torino

via Carlo Alberto 10 

10123 Torino Italy

email: lea.terracini@unito.it

\bigskip

Paolo Valabrega

Dipartimento di Scienze Matematiche 

Politecnico di Torino 

Corso Duca degli Abruzzi 24 

10129 Torino Italy

email: paolo.valabrega@polito.it

\bigskip

Aknowledgements

The paper was written while P. Valabrega was members of Indam-Gnsaga and L. Terracini and P. Valabrega were supported by the Ministry grant 2010-2011 MIUR-PRIN Geometria delle Variet\`a Algebriche.

\bigskip

\end{document}